\newtheorem{theorem}{Theorem}[section]
\newtheorem{corollary}[theorem]{Corollary}
\newtheorem{lemma}[theorem]{Lemma}
\theoremstyle{definition}
\newtheorem{definition}[theorem]{Definition}
\newtheorem{example}[theorem]{Example}
\newtheorem{rmk}[theorem]{Remark}
\theoremstyle{definition}
	\crefname{app-corollary}{Corollary}{Corollaries}
	\Crefname{app-corollary}{Corollary}{Corollaries}
	\crefname{app-definition}{Definition}{Definitions}
	\Crefname{app-definition}{Definition}{Definitions}
	\crefname{figure}{Figure}{Figures}
	\Crefname{figure}{Figure}{Figures}
	\crefname{lemma}{Lemma}{Lemmata}
	\Crefname{lemma}{Lemma}{Lemmata}
	\crefname{app-lemma}{Lemma}{Lemmata}
	\Crefname{app-lemma}{Lemma}{Lemmata}
	\crefname{app-proposition}{Proposition}{Proposition}
	\Crefname{app-proposition}{Proposition}{Proposition}
	\crefname{app-theorem}{Theorem}{Theorems}
	\Crefname{app-theorem}{Theorem}{Theorems}
\newcommand\Q{\mathbb{Q}}
\newcommand{\set}[1]{\left\{ #1 \right\} }
\newcommand{\given}{\;|\;}
\renewcommand{\epsilon}{\varepsilon}
\definecolor{highlight}{HTML}{cdeef2}
\newcommand{\Ass}{\mathrm{Ass}}
\newcommand{\Spec}{\mathrm{Spec}}
\newcommand{\Min}{\mathrm{Min}}
\renewcommand{\dim}{\mathrm{dim}}
\title{Completions of Quasi-excellent Domains}
\author{David Baron, Ammar Eltigani, S. Loepp, AnaMaria Perez, M. Teplitskiy}
\date{\today}
\begin{document}

\setlength{\parindent}{10 pt}
\setlength{\abovedisplayskip}{2pt}
\setlength{\belowdisplayskip}{2pt}
\setlength{\abovedisplayshortskip}{0pt}
\setlength{\belowdisplayshortskip}{0pt}

\setlength\fboxsep{10pt}

\maketitle


\begin{abstract}
    Let $T$ be a complete local (Noetherian) ring of characteristic zero.  We find necessary and sufficient conditions for $T$ to be the completion of a quasi-excellent local domain.  In the case that $T$ contains the rationals, we provide necessary and sufficient conditions for $T$ to be the completion of a \emph{countable} quasi-excellent local domain.  We also prove results regarding the possible lengths of maximal saturated chains of prime ideals of these quasi-excellent local domains, and we show that these results lead to interesting examples of noncatenary quasi-excellent local domains.
\end{abstract}

\section{Introduction}
In this paper, we aim to understand completions of quasi-excellent local domains. Completions of local (Noetherian) rings are an important tool in commutative algebra because complete local rings are very well understood via Cohen's structure theorem. If we want to understand certain properties of a local ring $A$, a common technique is to study the completion of the ring, $\widehat{A}$, with respect to its maximal ideal, in the hope that understanding $\widehat{A}$ will provide desired information about $A$. This process of passing to the completion, however, does not always act the way one might expect. For example, it is not difficult to find an example of a local domain whose completion is not a domain. In fact, in 1986, Lech proved the following remarkable theorem, showing that almost all complete local rings are the completion of a local domain. 

\begin{theorem}[\cite{Lech}, Theorem 1]\label{Lech's Theorem} 
    Let $T$ be a complete local ring with maximal ideal $M$. Then $T$ is the completion of a local domain if and only if
    \begin{enumerate}[(i)]
        \item No integer of $T$ is a zero divisor, and
        \item Unless equal to $(0)$, $M$ is not in $\Ass(T)$.
    \end{enumerate}
\end{theorem}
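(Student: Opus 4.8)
\textbf{Necessity} is the straightforward direction. Suppose $T\cong\widehat A$ with $(A,\mathfrak m)$ a local domain. The completion map $A\to\widehat A=T$ is faithfully flat, hence injective, and since $A$ is a domain, flatness makes $T$ a torsion-free $A$-module. For (i): if $x\in A$ is a nonzerodivisor, tensoring $0\to A\xrightarrow{x}A$ with $T$ shows $x$ remains a nonzerodivisor in $T$; the nonzero integers of $T$ lie in the domain $A$, where they are nonzerodivisors (indeed units when $\operatorname{char}A$ is prime), so no nonzero integer of $T$ is a zerodivisor. For (ii): if $M=\operatorname{Ann}_T(t)$ with $t\ne 0$, then $\operatorname{Ann}_A(t)=A\cap M=\mathfrak m$ (the map is local), so $At\cong A/\mathfrak m$ is a torsion $A$-module embedded in the torsion-free module $T$; this forces $\mathfrak m=(0)$, so $A$ is a field, $T=A$ is a field, and $M=(0)$.

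For \textbf{sufficiency} assume (i) and (ii). If $\dim T=0$ then $T$ is Artinian, $\Ass(T)=\{M\}$, and (ii) gives $M=(0)$: $T$ is a field and $A:=T$ works. So assume $\dim T\ge 1$. The plan is to build a subring $A\subseteq T$ that is (1) quasi-local with maximal ideal $A\cap M$, (2) a domain, (3) such that $A\to T/M^2$ is surjective, and (4) such that $IT\cap A=I$ for every finitely generated ideal $I$ of $A$. This is enough: (4) forces $A$ Noetherian (if $IT=(a_1,\dots,a_s)T$ with $a_i\in I$ then $I\subseteq IT\cap A=(a_1,\dots,a_s)A\subseteq I$), hence local; (3) yields $\mathfrak m_AT=M$ and $A/\mathfrak m_A\cong T/M$, hence $A+M^n=T$ for all $n$; and $M^n=\mathfrak m_A^nT$ together with (4) gives $M^n\cap A=\mathfrak m_A^n$. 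Therefore $\widehat A=\varprojlim A/\mathfrak m_A^n=\varprojlim(A+M^n)/M^n=\varprojlim T/M^n=T$, so $T$ is the completion of the local domain $A$.

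To build $A$ I would start, using (i), from the prime subring $\Pi\subseteq T$, which is a domain ($\mathbb F_p$ if $\operatorname{char}T=p$, $\mathbb Z$ if $\operatorname{char}T=0$; composite characteristics are ruled out by (i)); localizing at $\Pi\cap M$ gives a Noetherian quasi-local domain $R_0\subseteq T$ (one of $\mathbb F_p$, $\mathbb Q$, $\mathbb Z_{(p)}$) with $\mathfrak m_{R_0}=R_0\cap M$ and $R_0\cap\mathfrak p=(0)$ for every $\mathfrak p\in\Ass(T)$ — in mixed characteristic $\mathbb Z_{(p)}\hookrightarrow T$ precisely because (i) makes $p$ a nonzerodivisor. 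Then I would construct an ascending chain $R_0\subseteq R_1\subseteq\cdots\subseteq R_\alpha\subseteq\cdots$ of quasi-local subrings of $T$, each with maximal ideal $R_\alpha\cap M$ and each meeting no $\mathfrak p\in\Ass(T)$, indexed by an ordinal long enough to handle every ``task'': first adjoin finitely many elements so that $R\to T/M^2$ becomes onto; then, one task at a time, for each potential failure of $IT\cap A=I$ — a finite tuple $a_1,\dots,a_n$ from the current ring and an element $c\in(a_1,\dots,a_n)T\cap A$ — adjoin elements making $c\in(a_1,\dots,a_n)R_{\alpha+1}$. Set $A=\bigcup_\alpha R_\alpha$. (This is Lech's construction in modern form; alternatively, since $T$ is module-finite over some complete regular local subring $S\supseteq R_0$, one can adjoin to $R_0$ an algebraization of a system of parameters of $S$ and of module generators of $T$ over $S$, and then perturb these and their defining relations by a high power of $M$ so that the resulting subring misses every $\mathfrak p\in\Ass(T)$.)

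The hard part will be the prime-avoidance at the heart of the construction: at each step one must adjoin the elements forced by (3)--(4) \emph{while keeping the ring a domain}, i.e., while staying outside the zerodivisor set $\bigcup_{\mathfrak p\in\Ass(T)}\mathfrak p$ of $T$. Because (3)--(4) drag ever more elements of $M$ into the ring, this requires being able to replace any element one is about to adjoin by itself plus a suitable element of $\mathfrak m_A$ (and, later, of higher powers of $M$) so as to dodge all the finitely many associated primes without undoing the current task. This works because, once $R\to T/M^2$ is onto, $\mathfrak m_RT=M$ is not contained in any $\mathfrak p\in\Ass(T)$ — equivalently, no associated prime equals $M$ — which is exactly hypothesis (ii); and (i) is what allows the construction to begin with a domain. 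I would expect the precise avoidance lemma, together with the bookkeeping that makes the transfinite recursion terminate with all tasks done (and, for the sharper ``countable'' statements, with $|A|$ controlled), to be the bulk of the argument.
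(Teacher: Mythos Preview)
The paper does not give its own proof of this statement: Theorem~\ref{Lech's Theorem} is quoted from \cite{Lech} as background and is never re-proved here. So there is no in-paper argument to compare your proposal against.

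That said, your sketch is faithful to Lech's original strategy and to the refinements used throughout this literature (including the results the present paper imports from \cite{Susan} and \cite{Teresa}). Your necessity argument is correct; the only cosmetic point is that in positive characteristic $p$ the image of $p$ in $T$ is $0$, and the convention implicit in the statement is that this does not count as a zerodivisor. For sufficiency, your four desiderata on the subring $A$ --- quasi-local with $A\cap M$ maximal, domain, surjecting onto $T/M^2$, and $IT\cap A=I$ for finitely generated $I$ --- are exactly the ones used in this line of work; Proposition~\ref{completion proving machine}-style arguments (see the machinery from \cite{Teresa} recalled in Section~4) make precise the deduction that such an $A$ is Noetherian with $\widehat A=T$. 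You have also correctly located where the two hypotheses enter: condition~(i) lets you seed the construction with a domain $R_0$ inside $T$ (and avoid composite characteristic), and condition~(ii) is precisely what guarantees the prime avoidance step succeeds, since it says $M\nsubseteq\mathfrak p$ for every $\mathfrak p\in\Ass(T)$.

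The one place your write-up is more a promise than a proof is the avoidance/adjunction step itself. The actual lemma one needs is: given a countable (or suitably small) quasi-local domain $R\subseteq T$ with $R\cap\mathfrak p=(0)$ for all $\mathfrak p\in\Ass(T)$, and given $u\in T$, there exists $u'\equiv u\pmod{M^2}$ (or modulo whatever ideal the current task dictates) such that the localization of $R[u']$ at $R[u']\cap M$ again meets no associated prime. This is where cardinality of $R$ versus the cosets in $T/\mathfrak p$ matters, and it is the technical heart of Lech's paper; you are right to flag it as ``the bulk of the argument,'' but as written your proposal would still owe the reader that lemma.
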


In \cite{Susan}, this result is extended to excellent local domains when the characteristic of $T$ is zero. In particular, in Theorem 9 from \cite{Susan}, it is shown that a complete local ring $T$ of characteristic zero is the completion of a excellent local domain if and only if

\begin{enumerate}[(i)]
    \item $T$ is reduced,
    \item $T$ is equidimensional, and
    \item no integer of $T$ is a zero divisor.
\end{enumerate}

One of the goals of this paper is to prove the analogous result for quasi-excellent local domains. That is, given a complete local ring $T$ of characteristic zero, we wish to find necessary and sufficient conditions for $T$ to be the completion of a quasi-excellent local domain. In fact, in Theorem \ref{quasi-excellent characterization}, we show that conditions (i) $T$ is reduced and (iii) no integer of $T$ is a zero divisor from Theorem 9 in \cite{Susan} are both necessary and sufficient. Recall from Theorem 3.4 in \cite{Michaelson} that if the completion of a local ring $R$ is equidimensional, then $R$ is universally catenary. Since quasi-excellent rings need not be universally catenary, it is not surprising that, for our theorem, the condition that $T$ is equidimensional is not included. 

The proof that conditions (i) and (iii) are necessary follows fairly quickly from results in the literature. However, showing that they are sufficient is more challenging. We consider the case where $\dim(T)=0$ and the case where $\dim(T) \geq 1$ separately. If $\dim(T)=0$ and $T$ is reduced, then $T$ is a field, so it is the completion of an excellent local domain (namely, itself). In the case where $\dim(T) \geq 1$, we use results from \cite{Susan} to construct a quasi-excellent local domain whose completion is $T$. 

It is worth noting that the technique used in \cite{Susan} to construct an excellent local domain $A$ whose completion is $T$ produces an $A$ that is uncountable. Therefore, it is interesting to ask what the necessary and sufficient conditions are on a complete local ring $T$ of characteristic zero for it to be the completion of a \textit{countable} quasi-excellent local domain. Suppose $T$ is a complete local ring with maximal ideal $M$ and assume that $T$ contains the rationals. It is shown in \cite{Teresa} that $T$ is the completion of a countable excellent local domain if and only if
\begin{enumerate}[(i)]
    \item $T$ is equidimensional,
    \item $T$ is reduced, and
    \item $T/M$ is countable.
\end{enumerate}

In Theorem \ref{4.1new}, we generalize this result to countable quasi-excellent local domains. In particular, we show that if $T$ is a complete local ring containing the rationals and if $M$ is the maximal ideal of $T$, then $T$ is the completion of a countable quasi-excellent local domain if and only if $T$ is reduced and $T/M$ is countable.  Note that, once again and not surprisingly, the condition that $T$ is equidimensional is not needed. To prove Theorem \ref{4.1new}, we rely heavily on a result proved in \cite{Teresa} (See Remark \ref{important}).

The difference between excellent and quasi-excellent rings is that, by definition, excellent rings are universally catenary, while quasi-excellent rings are not required to be universally catenary. Thus, quasi-excellent rings can be noncatenary. For many years there was doubt that a noncatenary Noetherian ring actually existed. Nagata settled the question in 1956 when he constructed a family of noncatenary local domains by ``gluing together" maximal ideals of different heights in a semilocal domain to produce a noncatenary local (Noetherian) domain [\cite{Nagata}, p. 203]. Heitmann later showed in \cite{HeitmannNonCat} that, given any finite partially ordered set $X$, there exists a Noetherian domain $R$ such that $X$ can be embedded into the prime spectrum of $R$ in a way that preserves saturated chains.  This result shows that there there is no finite bound on how noncatenary a local domain can be. In \cite{Small2017}, Avery et al. further extended Heitmann's and Nagata's work by classifying completions of noncatenary local domains.


Previous work on noncatenary Noetherian domains motivates one to ask how noncatenary a quasi-excellent domain can be.  In fact, in a recent article (see \cite{Colbert}), the authors show that, given any finite partially ordered set $X$, there exists a quasi-excellent local domain $R$ such that $X$ can be embedded into the prime spectrum of $R$ in a way that preserves saturated chains. As a result, there is no finite bound on how noncatenary a quasi-excellent local domain can be. In this paper, we explore the relationship between noncatenary quasi-excellent local domains and their completions. 
In Corollary \ref{noncatcompletions} we characterize completions of noncatenary quasi-excellent local domains.  In fact, in Theorem \ref{specified length chains}, we answer the following more specific question. Let $m_1, m_2, \ldots ,m_n$ be positive integers. If $T$ is a complete local ring of characteristic zero, under what conditions is there a quasi-excellent local domain $A$ with completion $T$ such that $A$ contains maximal saturated chains of prime ideals of lengths $m_1,m_2, \ldots ,m_n$? (Here, by a maximal saturated chain of prime ideals in the local domain $A$, we mean a saturated chain of prime ideals that starts at $(0)$ and ends at the maximal ideal of $A$.) We address this question in Section 3 and Section 4.

In Section 3, we prove the following result (See Theorem \ref{specified length chains}). Let $T$ be a complete local ring of characteristic zero and let $m_1, \ldots ,m_n$ be positive integers with $1 < m_1 < \cdots < m_n < \mbox{dim}(T)$. We show that $T$ is the completion of a quasi-excellent local domain with maximal saturated chains of prime ideals of lengths $m_1, \ldots ,m_n$ if and only if no integer of $T$ is a zerodivisor, $T$ is reduced, and there exist minimal prime ideals $P_1, \ldots ,P_n$ of $T$ such that dim$(T/P_i) = m_i$ for $1 \leq i \leq n$. The proof of this theorem relies heavily on a domain $A$ constructed in \cite{Susan} and a bijection pointed out in that paper between prime ideals of $T$ that are not minimal and nonzero prime ideals of $A$. An application of our results is that we find sufficient conditions on a complete local ring $T$ of characteristic zero so that $T$ is the completion of a quasi-excellent catenary local domain, but is not the completion of a universally catenary local domain.  (See Theorem \ref{cat not uni cat}.)

In Section 4, we prove analogous theorems about the lengths of maximal saturated chains in \textit{countable} quasi-excellent local domains. To classify when $T$ is the completion of a countable quasi-excellent local domain $A$ with maximal saturated chains of specified lengths, we use powerful tools from \cite{Teresa}. Given a complete local ring $T$ satisfying the necessary conditions, we construct a countable subring $R$ of $T$ that contains generators of strategically chosen prime ideals of $T$.  We then find a countable quasi-excellent local subring $S$ of $T$ such that the completion of $S$ is $T$ and such that $S$ contains $R$. We show that since $S$ contains generators of our strategically chosen prime ideals of $T$, it must have maximal saturated chains of prime ideals of the desired lengths.

In Section 5, we show that our results can be used to construct interesting examples of noncatenary quasi-excellent local domains. In particular, in Example \ref{horizontal}, we construct a quasi-excellent local domain $A$ that has infinitely many height one prime ideals $P_i$ for which $A/P_i$ is noncatenary. 

Throughout the paper, when we say a ring is \textit{local}, we mean it has a unique maximal ideal and is Noetherian. We use the term \textit{quasi-local} to mean a ring that has a unique maximal ideal that is not necessarily Noetherian.  If $A$ is a local ring or a quasi-local ring with unique maximal ideal $M$, we denote it $(A,M)$. We define $\widehat{A}$ to be the completion of a local ring $A$ at its maximal ideal, and we say that the length of a chain of prime ideals $P_0 \subsetneq \cdots \subsetneq P_n$ is $n$. A \textit{precompletion} of a complete local ring $T$ is a local ring $A$ whose completion is $T$, i.e. $\widehat{A} \cong T$. 

\section{Preliminaries}


The primary goal of this paper is to generalize previously known results about excellent domains and about noncatenary domains to quasi-excellent domains. We begin by providing some background on quasi-excellent rings.


We call a ring $A$ \textit{catenary} if, for all pairs of prime ideals $P$ and $Q$ of $A$ with $P \subseteq Q$, 
all saturated chains of prime ideals between $P$ and $Q$ have the same length. Otherwise, we call $A$ \textit{noncatenary}. If all finitely generated $A$-algebras are catenary, we call $A$ \textit{universally catenary}. A Noetherian ring $A$ is defined to be \textit{quasi-excellent} if the following two conditions hold:

(i)  For all prime ideals $P$ of $A$, the ring $\widehat{A} \otimes_A L$ is regular for every finite field extension $L$ of $A_P/PA_P$.

(ii) Reg$(B) \subset \Spec(B)$ is open for every finitely generated $A$-algebra $B$.

\noindent A quasi-excellent ring that is universally catenary is said to be an \textit{excellent} ring. Note that a quasi-excellent ring can be noncatenary, catenary, or universally catenary. 

\begin{rmk}\label{quasi-excllent}
Suppose that $A$ is a local ring.  Then, if $A$ satisfies condition (i) of being quasi-excellent, it also satifies condition (ii) of being quasi-excellent (See, for example, Chapter 13, Theorem 76 and Lemma 33.4 in \cite{MatsumuraPink}).   It is also known that a local ring $A$ satisfies condition (ii) of being quasi-excellent if, for all $P \in \Spec(A)$, the ring $\widehat{A} \otimes_A L$ is regular for every purely inseparable finite field extension $L$ of $k(P) = A_P/PA_P$ (See, for example, \cite{Rotthaus}). Thus, if $A$ is a local ring and, for all $P \in \Spec(A)$, the ring $\widehat{A} \otimes_A L$ is regular for every purely inseparable finite field extension $L$ of $k(P)$, then $A$ is quasi-excellent.
\end{rmk}

We now state two important theorems that will be used in Section 3. 

\begin{theorem}[\cite{Matsumura}, Theorem 31.7] \label{universally cat equiv}
     For a Noetherian local ring $A$, the following conditions are equivalent:
    \begin{enumerate}[(i)]
        \item $A$ is universally catenary,
        \item $A[x]$ is catenary, and
        \item $\widehat{A/P}$ is equidimensional for every $P \in \Spec (A)$.
    \end{enumerate}
\end{theorem}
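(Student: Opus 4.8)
The plan is to prove the three equivalences by verifying (i) $\Rightarrow$ (ii), (ii) $\Rightarrow$ (i), and (i) $\Leftrightarrow$ (iii), using two standard facts without proof: that the property of being catenary passes to quotient rings and to localizations, and that every complete local ring is universally catenary (by Cohen's structure theorem such a ring is a homomorphic image of a complete regular local ring, and regular rings are Cohen--Macaulay, hence universally catenary). The implication (i) $\Rightarrow$ (ii) is immediate, since $A[x]$ is a finitely generated $A$-algebra. The implication (ii) $\Rightarrow$ (i) is the ``polynomial characterization'' of universal catenary-ness, valid for an arbitrary Noetherian ring $A$; one reduces it, via the dimension formula, to the fact that for a Noetherian local domain $D$ the catenary-ness of $D[x]$ already forces the dimension formula to hold for all finitely generated domain extensions of $D$, and I would invoke the treatment of this in \cite{Matsumura}. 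The substantive part is therefore (i) $\Leftrightarrow$ (iii).

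For (i) $\Leftrightarrow$ (iii) I would first pass to quotient domains. A Noetherian ring is catenary if and only if each of its quotient domains is catenary, and any quotient domain of a finitely generated $A$-algebra is a finitely generated domain over some $A/\mathfrak{p}$; hence $A$ is universally catenary if and only if $A/\mathfrak{p}$ is universally catenary for every prime $\mathfrak{p}$ of $A$ (in fact it is enough to range over the minimal primes of $A$). On the other hand $\widehat{A/\mathfrak{p}} \cong \widehat{A}/\mathfrak{p}\widehat{A}$, so (iii) says precisely that $\widehat{A/\mathfrak{p}}$ is equidimensional for every $\mathfrak{p}$. Thus it suffices to prove, for a Noetherian local domain $D$, that $D$ is universally catenary if and only if $\widehat{D}$ is equidimensional, and then apply this with $D = A/\mathfrak{p}$ (the ``if'' direction for the minimal primes to get (iii) $\Rightarrow$ (i), the ``only if'' direction for all primes to get (i) $\Rightarrow$ (iii)).

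This last statement is Ratliff's theorem characterizing quasi-unmixed local rings: for a local ring, equidimensionality of the completion is by definition the quasi-unmixed condition, and a domain is automatically equidimensional. Its proof compares saturated chains of prime ideals in $D$ and in its polynomial extensions with saturated chains in $\widehat{D}$: the map $D \to \widehat{D}$ is faithfully flat and local, so chains lift by going-down and contract by lying-over, and $\dim \widehat{D} = \dim D$; since $\widehat{D}$ is complete it is universally catenary, so chains in $\widehat{D}$ are perfectly behaved, and equidimensionality is exactly what is needed to transport this good behavior to $D$, while in the converse direction universal catenary-ness of $D$ is what forces $\widehat{D}$ to be equidimensional. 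I expect the main obstacle to be the control of the formal fibres of $D$ — the way the primes of $\widehat{D}$ lie over a given prime of $D$, together with their coheights — because passing to the completion can a priori produce minimal primes of $\widehat{D}$ of smaller dimension, or embedded primes; ruling this out under hypothesis (iii), and showing that universal catenary-ness prevents it in the converse direction, is the technical heart of the argument, for which I would cite Ratliff's work (as presented in \cite{Matsumura}) rather than reproduce it.
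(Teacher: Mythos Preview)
The paper does not prove this statement; it is quoted verbatim as Theorem 31.7 of \cite{Matsumura} and used as a black box (only the implication (i) $\Rightarrow$ (iii) is ever invoked, in the proof of Theorem \ref{cat not uni cat}). There is therefore no ``paper's own proof'' to compare against.

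Your outline is the standard route taken in \cite{Matsumura}: the equivalence (i) $\Leftrightarrow$ (ii) is the polynomial characterization of universal catenarity, and (i) $\Leftrightarrow$ (iii) is Ratliff's theorem on formally equidimensional (quasi-unmixed) local rings, reduced to the domain case by passing to $A/\mathfrak{p}$. As a sketch this is fine, though note that in several places you are essentially pointing to \cite{Matsumura} rather than arguing, so the proposal is closer to an annotated citation than an independent proof. If you intend a self-contained argument, the step (ii) $\Rightarrow$ (i) and the Ratliff direction ``$\widehat{D}$ equidimensional $\Rightarrow$ $D$ universally catenary'' both need the dimension formula and a careful fibre-dimension argument spelled out; what you have written identifies the correct ingredients but does not yet carry them through.
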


 
\begin{corollary}[\cite{Matsumura}, Corollary 23.9]\label{Matsumura 32.9}
     Let $(A,M)$ and $(B,N)$ be Noetherian local rings and $A \longrightarrow B$ a local homomorphism. Suppose $B$ is flat over $A$.  We have
    \begin{enumerate}[(i)]
        \item If $B$ is normal (or reduced), then so is $A$;
        \item If both $A$ and the fiber rings of $A \longrightarrow B$ are normal (or reduced), then so is $B$.
    \end{enumerate}
\end{corollary}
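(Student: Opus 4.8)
The plan is to reduce both parts to Serre's criteria (a Noetherian ring is reduced precisely when it satisfies $(R_0)$ and $(S_1)$, and normal precisely when it satisfies $(R_1)$ and $(S_2)$), together with the standard behavior of depth, dimension, and regularity under a flat local homomorphism. First I would record that $A \longrightarrow B$ is automatically \emph{faithfully} flat: since $\mathfrak{m}_A B \subseteq \mathfrak{m}_B \subsetneq B$, the closed fiber $B/\mathfrak{m}_A B$ is nonzero, and a flat module over a local ring with nonzero special fiber is faithfully flat. Consequently $A \longrightarrow B$ is injective and $IB \cap A = I$ for every ideal $I$ of $A$.

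For part (i), the reduced case is immediate: a nilpotent $a \in A$ has nilpotent, hence zero, image in $B$, so $a = 0$ by injectivity. For the normal case I would use that a Noetherian local ring which is normal has connected spectrum and is therefore a normal \emph{domain}; so $B$ is a normal domain and $A \subseteq B$ is a domain. Since the nonzero elements of $A$ are nonzerodivisors on $B$, the localization $S^{-1}B$ with $S = A \setminus \{0\}$ sits inside $\operatorname{Frac}(B)$ and $\operatorname{Frac}(A) \hookrightarrow \operatorname{Frac}(B)$. If $x = a/s \in \operatorname{Frac}(A)$ is integral over $A$, then it is integral over the normal ring $B$ and lies in $\operatorname{Frac}(B)$, hence $x \in B$; thus $a \in sB \cap A = sA$, giving $x \in A$. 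So $A$ is integrally closed in its fraction field, i.e.\ normal.

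For part (ii) I would treat the $(S_n)$ and $(R_n)$ conditions separately. For $(S_n)$: given $Q \in \Spec(B)$ with $P := Q \cap A$, the flat-local formulas $\operatorname{depth}(B_Q) = \operatorname{depth}(A_P) + \operatorname{depth}(B_Q/PB_Q)$ and $\dim(B_Q) = \dim(A_P) + \dim(B_Q/PB_Q)$, combined with $\operatorname{depth}(A_P) \geq \min(n,\dim A_P)$ and the analogous inequality for the fiber $B_Q/PB_Q$ (a localization of the fiber ring $B \otimes_A k(P)$, hence itself $(S_n)$), yield $\operatorname{depth}(B_Q) \geq \min(n,\dim B_Q)$ via an elementary estimate on $\min(n,\cdot)$. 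For $(R_n)$: when $\operatorname{ht}(Q) \leq n$ the dimension formula forces $\operatorname{ht}(P) \leq n$, so $A_P$ is regular by $(R_n)$ for $A$; meanwhile $\dim(B_Q/PB_Q) \leq n$ and the fiber ring is $(R_n)$ (a normal ring is $(R_1)$, a reduced ring is $(R_0)$), so the local ring $B_Q/PB_Q$ of dimension at most $n$ is regular; since $A_P \longrightarrow B_Q$ is flat local with regular source and regular closed fiber, $B_Q$ is regular. Taking $n = 0,1$ and assembling via Serre: $A$ and its fibers reduced forces $B$ to be $(R_0)+(S_1)$, hence reduced, and $A$ and its fibers normal forces $B$ to be $(R_1)+(S_2)$, hence normal.

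The substantive content — and where I would cite existing theory rather than reprove — is part (ii): additivity of depth and dimension along flat local homomorphisms, ascent of regularity along flat local homomorphisms with regular closed fiber, and Serre's criteria. Once these are in hand, the remaining bookkeeping with $\min(n,\cdot)$ is routine, and part (i) is comparatively elementary, resting only on faithful flatness and on the fact that a connected normal Noetherian ring is a normal domain.
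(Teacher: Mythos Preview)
The paper does not prove this statement at all: it is quoted verbatim as Corollary~23.9 from Matsumura's \emph{Commutative Ring Theory} and used as a black box in the proof of Theorem~\ref{quasi-excellent characterization}. So there is no ``paper's own proof'' to compare against.

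Your argument is correct and is, in fact, essentially Matsumura's: part~(ii) via Serre's $(R_n)$/$(S_n)$ criteria together with the additivity of depth and dimension along flat local maps and ascent of regularity with regular closed fiber is exactly the textbook proof. For part~(i), Matsumura also deduces descent of $(R_n)$ and $(S_n)$ along faithfully flat maps and then applies Serre; your direct argument (injectivity for reduced, the $sB\cap A=sA$ trick for normal) is a slightly more hands-on alternative that avoids invoking Serre's criterion a second time. Either route is fine, and both rest on the same faithful-flatness facts.
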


Recall that, if $A$ is a local ring, then $\widehat{A}$ is a faithfully flat and regular extention of $A$. 

The next result is the main tool for our results in Section 3. Recall that the generic formal fiber ring of a local domain $A$ is defined to be $\widehat{A} \otimes_A QF(A)$ where $QF(A)$ is the quotient field of $A$.  Since there is a one to one correspondence between the prime ideals in the generic formal fiber ring of $A$ and the prime ideals $Q$ of $\widehat{A}$ satisfying $Q \cap A = (0)$, one can informally think of the prime ideals of the generic formal fiber ring of $A$ as being the prime ideals $Q$ of $\widehat{A}$ that satisfy $Q \cap A = (0)$.  Also recall that, for a reduced ring, the set of associated prime ideals is the same as the set of minimal prime ideals.  
So we have that $(iii)$ in the following lemma is equivalent to the statement that $Q \in \Spec(T)$ satisfies $Q \cap A = (0)$ if and only if $Q$ is a minimal prime ideal of $T$.


\begin{lemma}[\cite{Susan}, Lemma 8]\label{Lemma 8}
    Let $(T,M)$ be a complete local reduced ring of dimension at least one. Suppose no integer of $T$ is a zero divisor. Then there exists a local domain $A$ such that
    \begin{enumerate}[(i)]
        \item $\widehat{A} = T$,
        \item If $P$ is a nonzero prime ideal of $A$, then $T \otimes_A k(P) \cong k(P)$ where $k(P) = A_P/PA_P$, \label{l8 - ii}
        \item The generic formal fiber ring of $A$ is semilocal with maximal ideals the associated prime ideals of $T$, and
        \item If $I$ is a nonzero ideal of $A$, then $A/I$ is complete.
    \end{enumerate}
\end{lemma}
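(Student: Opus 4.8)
The plan is to build $A$ from the inside, as a nested union of small subrings of $T$, following the standard approach to constructing precompletions developed in \cite{Susan} and its antecedents. Since $T$ is reduced and Noetherian, its set of zero divisors is the union of its finitely many minimal primes $\mathfrak{p}_1,\dots,\mathfrak{p}_n$, and since $\dim T\ge 1$ the maximal ideal $M$ is not among them. Because no integer of $T$ is a zero divisor, $T$ has characteristic zero and $\mathbb{Z}\cap\mathfrak{p}_i=(0)$ for each $i$ (a nonzero integer lying in a minimal prime of a reduced ring would be a zero divisor); hence the prime subring of $T$, localized at its contraction of $M$, is a domain $R_0\subseteq T$ that is quasi-local with maximal ideal $R_0\cap M$ and satisfies $R_0\cap\mathfrak{p}_i=(0)$ for all $i$. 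Note also, by prime avoidance, that a prime $Q$ of $T$ fails to be minimal precisely when $Q\not\subseteq\mathfrak{p}_i$ for every $i$, equivalently when $Q$ contains a nonzerodivisor of $T$; and recall from the discussion preceding the lemma that proving (iii) amounts to arranging that the primes $Q$ of $T$ with $Q\cap A=(0)$ are exactly $\mathfrak{p}_1,\dots,\mathfrak{p}_n$.

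The engine is the usual ``closure'' lemma. Call a subring $R\subseteq T$ \emph{admissible} if it is small (of suitably bounded cardinality $<|T|$), quasi-local with maximal ideal $R\cap M$, satisfies $R\cap\mathfrak{p}_i=(0)$ for all $i$, and satisfies $JT\cap R=J$ for every finitely generated ideal $J$ of $R$. The lemma states that, given an admissible $R$ together with a target, there is an admissible $R'\supseteq R$ which additionally satisfies any one of: (a) $Q\cap R'\ne(0)$, for a given non-minimal prime $Q$ of $T$; (b) $u\in R'+cT$, for given $u\in M$ and given nonzero $c\in R\cap M$; (c) $t$ lies in the image of $R'\to T/M^2$, for a given $t\in T$. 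In each case $R'$ is produced by adjoining one carefully chosen element $a$ to $R$ and re-localizing at the contraction of $M$ — $a$ drawn from $Q$ in case (a), from $u+cT$ in case (b), from $t+M^2$ in case (c). The key point is that the set of candidates has cardinality $|T|$ (e.g.\ in (b), $c$ is a nonzerodivisor since $R\cap\mathfrak{p}_i=(0)$, so $|u+cT|=|T|$), which exceeds the number of ``bad'' algebraic conditions over $R$ — at most $|R|$ many — that must be avoided to preserve admissibility; so a good $a$ exists. This is the standard transcendence-avoidance argument, which I would import from the $N$-subring machinery of \cite{Susan}.

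Now iterate the closure lemma along a chain $R_0\subseteq R_1\subseteq\cdots$ of admissible subrings of appropriate length, taking unions at limit stages, with bookkeeping arranged so that every $t\in T$ is eventually lifted modulo $M^2$, every non-minimal prime of $T$ is eventually hit, and every pair $(u,c)$ with $u\in M$ and $c$ a nonzero element of some $R_\beta\cap M$ is eventually handled. Put $A=\bigcup_\beta R_\beta$. Admissibility passes to directed unions, so $A$ is quasi-local with maximal ideal $M\cap A$, is a domain (it contains no zero divisor of $T$, since $A\cap\mathfrak{p}_i=(0)$), surjects onto $T/M^2$, and satisfies $JT\cap A=J$ for every finitely generated ideal $J$. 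This last property makes $A$ Noetherian: given an ideal $I$, choose $t_1,\dots,t_k\in I$ generating the finitely generated ideal $IT$; then $I\subseteq IT\cap A=(t_1,\dots,t_k)T\cap A=(t_1,\dots,t_k)A\subseteq I$, so $I$ is finitely generated. Together with $A\to T/M^2$ onto (which forces $\mathfrak{m}_AT=M$, hence $M^n=\mathfrak{m}_A^nT$, and then $M^n\cap A=\mathfrak{m}_A^n$) and density of $A$ in $T$, this gives $\widehat{A}=T$, which is (i). Moreover, by construction the primes $Q$ of $T$ with $Q\cap A=(0)$ are exactly $\mathfrak{p}_1,\dots,\mathfrak{p}_n$, so (iii) holds.

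For (iv), let $I$ be a nonzero ideal of $A$ and pick $0\ne c\in I$; if $c$ is a unit then $I=A$, while otherwise $c\in M$ and case (b) gives $M\subseteq A+cT\subseteq A+IT$, so $A+IT=T$ using $A+M=T$. Combined with $IT\cap A=I$ (faithful flatness of $\widehat{A}=T$ over $A$), this yields $A/I\cong T/IT=\widehat{A}/I\widehat{A}=\widehat{A/I}$, so $A/I$ is complete. Finally, (ii) follows from (iv): for a nonzero prime $P$ of $A$, (iv) gives $A/P\cong T/PT$, so $PT$ is prime and $k(P)=QF(A/P)=QF(T/PT)$, whence $T\otimes_A k(P)\cong(T/PT)\otimes_{A/P}QF(A/P)\cong QF(A/P)=k(P)$. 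The main obstacle is the closure lemma, and within it case (a), killing a non-minimal prime $Q$: one must exhibit an element of $Q$ that is simultaneously a nonzerodivisor of $T$, sufficiently transcendental over $R$ modulo each $\mathfrak{p}_i$ to keep $R'\cap\mathfrak{p}_i=(0)$, and compatible with the condition $JT\cap R'=J$. This is exactly where reducedness of $T$ (so that the zero divisors form a finite union of primes, giving only finitely many avoidance conditions) and the hypothesis that no integer of $T$ is a zero divisor (to get started with $R_0$ and to keep the prime ring embedded throughout) are essential; the cardinality bookkeeping needed to ensure every target is handled while keeping all $R_\beta$ small is the other point requiring care, though it is routine.
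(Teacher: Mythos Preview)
The paper does not prove this lemma: it is imported verbatim from \cite{Susan} (Lemma~8 there) and stated in the Preliminaries section without argument, with Remark~\ref{bijection} referring back to ``the last two paragraphs of the proof in \cite{Susan}'' for the bijection on primes. So there is no proof in this paper to compare your proposal against.

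That said, your sketch is a faithful outline of the construction in \cite{Susan}: start from the localized prime subring, build a transfinite chain of small quasi-local subrings avoiding the minimal primes of $T$, interleave steps that (a) hit every non-minimal prime, (b) achieve $A+cT=T$ for each nonzero $c\in M\cap A$, and (c) surject onto $T/M^2$, then take the union. Your deductions of (i), (iv), and (ii)-from-(iv) are correct and match the argument there. One caution: you fold the condition $JT\cap R=J$ for all finitely generated $J$ into the definition of ``admissible'' and claim it is preserved under single-element adjunction, but in the actual machinery this closure property is not maintained pointwise at each step---it is arranged at a separate ``closing up'' stage (cf.\ the role of Lemma~\ref{Yu 3.8new} in the countable setting of this paper). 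Your cardinality/avoidance heuristic is right in spirit, but the bookkeeping of which properties are preserved at each adjunction versus recovered at checkpoints is where the real work in \cite{Susan} lies, and your sketch elides that distinction.
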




\begin{rmk} \label{bijection}
In the last two paragraphs of the proof in \cite{Susan} of the above lemma, it is shown that there is a one to one correspondence between the prime ideals of $T$ that are not associated prime ideals of $T$ and the nonzero prime ideals of $A$.  In fact, it is shown that if $P$ is a nonzero prime ideal of $A$ then $PT$ is a prime ideal of $T$ and it is the only prime ideal of $T$ that lies over $P$. 
\end{rmk}

When we construct quasi-excellent domains that have maximal saturated chains of prime ideals of prescribed lengths, the one to one correspondence given in Remark \ref{bijection} will prove to be very useful. 


The following lemma provides conditions for a local ring $T$ to have a saturated chain of prime ideals of length $n$ with some nice properties. One of our goals in Section 3 and Section 4 is to start with a complete local ring $T$ with saturated chains of prime ideals of some specific lengths and find a quasi-excellent local domain $A$ with maximal saturated chains of prime ideals of the same lengths so that $\widehat{A}=T$. With this in mind, the following lemma becomes very important, as it provides conditions for $T$ to have these saturated chains of prime ideals of specific lengths that satisfy some desirable properties.

\begin{lemma}[\cite{Small2017}, Lemma 2.8]\label{Lemma 2.8}
    Let $(T,M)$ be a local ring with $M$ not in $\Ass(T)$ and let $P$ be in $\Min(T)$ with $\dim(T/P) = n$. Then there exists a saturated chain of prime ideals of $T$, $P \subsetneq Q_1 \subsetneq \cdots \subsetneq Q_{n-1} \subsetneq M$, such that, for each $i=1,\ldots,n-1$, $Q_i$ is not in $\Ass (T)$ and $P$ is the only minimal prime ideal of $T$ contained in $Q_i$.
\end{lemma}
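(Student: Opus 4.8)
The plan is to build the chain from the top down by repeatedly invoking prime avoidance, and to use the hypothesis $M \notin \mathrm{Ass}(T)$ together with the fact that $P$ is minimal to control which primes we pick. First I would set up the induction: I claim that for each $j$ with $0 \le j \le n-1$ there is a saturated chain $P = Q_0 \subsetneq Q_1 \subsetneq \cdots \subsetneq Q_j$ of primes of $T$ with $\dim(T/Q_j) = n-j$, with each $Q_i$ ($1 \le i \le j$) not in $\mathrm{Ass}(T)$, and with $P$ the unique minimal prime of $T$ contained in $Q_i$. The base case $j=0$ is immediate since $\dim(T/P)=n$ and $P$ is minimal. For the inductive step, given such a chain up to $Q_j$ with $\dim(T/Q_j) = n-j \ge 2$, I want to produce $Q_{j+1}$ with $Q_j \subsetneq Q_{j+1}$, $\dim(T/Q_{j+1}) = n-j-1$, $Q_{j+1} \notin \mathrm{Ass}(T)$, and $P$ the only minimal prime inside $Q_{j+1}$. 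The last step, going from $Q_{n-2}$ to $Q_{n-1}$ with $\dim(T/Q_{n-1})=1$, then forces $Q_{n-1} \subsetneq M$ and completes the chain.

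The key step is the choice of $Q_{j+1}$. Working in $T/Q_j$, which is local of dimension $n-j \ge 2$, I would consider the (finitely many) primes of $T/Q_j$ of dimension $n-j-1$ that lie directly above $(0)$ — equivalently, height-one primes of $T/Q_j$ in the relevant saturated sense; more carefully, I would pick a prime $\bar{Q}$ of $T/Q_j$ with $Q_j \subsetneq \bar{Q}$ and $\dim((T/Q_j)/\bar{Q}) = n-j-1$, chosen to avoid all the "bad" primes. The bad primes are: the associated primes of $T$ (there are finitely many, and none equals $M$ by hypothesis, but I must ensure the one I pick isn't among those strictly between $Q_j$ and $M$), and the minimal primes of $T$ other than $P$ — but since $P$ is the unique minimal prime inside $Q_j$, and any minimal prime of $T$ contained in $Q_{j+1} \supsetneq Q_j$ need not be inside $Q_j$, I need to handle those. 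The cleanest route is: since $\dim(T/Q_j) \ge 2$, the maximal ideal $M/Q_j$ has height $\ge 2$ in $T/Q_j$, so by prime avoidance (the standard fact that in a Noetherian local ring of dimension $\ge 2$ there are infinitely many height-one primes, hence one avoiding any finite set of primes not containing it) I can choose $Q_{j+1}$ of the right dimension that avoids the finitely many associated primes of $T$ that are $\ne M$ and avoids the finitely many minimal primes of $T$ not equal to $P$; I also need $Q_{j+1} \ne M$, which is automatic since $\dim(T/Q_{j+1}) \ge 1$. Then $Q_{j+1} \notin \mathrm{Ass}(T)$ by construction, and $P \subseteq Q_j \subseteq Q_{j+1}$ while no other minimal prime is inside $Q_{j+1}$, as desired.

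The main obstacle I expect is making the prime-avoidance step rigorous while simultaneously controlling the dimension of $T/Q_{j+1}$: I want $Q_{j+1}$ to both avoid a prescribed finite set of primes and to drop the dimension by exactly one (so that the chain stays saturated and reaches $M$ after exactly $n-1$ steps). The right tool is the existence, in a Noetherian local ring $(S,\mathfrak{m})$ with $\dim S \ge 2$, of infinitely many primes $\mathfrak{q}$ with $\dim(S/\mathfrak{q}) = \dim S - 1$ — this is a standard consequence of prime avoidance applied to $\mathfrak{m}$ and the finitely many minimal primes, iterated, or can be cited directly. One then chooses such a $\mathfrak{q}$ in $S = T/Q_j$ outside the finite bad set, which is possible precisely because the bad set is finite and none of its members is $\mathfrak{m} = M/Q_j$ (the associated primes $\ne M$ map to proper primes, and the minimal primes $\ne P$ either don't contain $Q_j$ — in which case they're irrelevant — or would contradict $P$ being the unique minimal prime under $Q_j$). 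Pulling $\mathfrak{q}$ back to $T$ gives $Q_{j+1}$, and a short check confirms all four required properties, closing the induction.
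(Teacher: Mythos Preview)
The paper does not supply its own proof of this lemma; it is quoted verbatim from \cite{Small2017} as a preliminary result, so there is no argument in the paper to compare your proposal against.

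That said, your overall strategy is the natural one and is essentially correct: work in the domain $T/Q_j$, note that it has dimension $n-j\ge 2$, and use a prime-avoidance/parameter argument to produce infinitely many height-one primes $\mathfrak{q}$ with $\dim((T/Q_j)/\mathfrak{q})=n-j-1$, then discard the finitely many bad ones. Two points deserve tightening. First, your phrase ``from the top down'' does not match what you actually do (you start at $Q_0=P$ and go up); this is cosmetic. Second, and more substantively, your treatment of the condition ``$P$ is the only minimal prime contained in $Q_{j+1}$'' has a gap. You write that you will choose $Q_{j+1}$ to ``avoid the finitely many minimal primes of $T$ not equal to $P$,'' but the issue is not that $Q_{j+1}$ might \emph{equal} some other minimal prime $P'$ (it cannot, since $Q_{j+1}\supsetneq Q_j\supseteq P$); the issue is that $Q_{j+1}$ might \emph{contain} $P'$. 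Your parenthetical remark that the $P'$ ``either don't contain $Q_j$\dots or would contradict\dots'' addresses the wrong direction of containment and does not rule this out.

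The fix is short: since $P'\not\subseteq Q_j$ by the inductive hypothesis, the image of $P'$ in $T/Q_j$ is a nonzero ideal, and in the Noetherian domain $T/Q_j$ any height-one prime containing a fixed nonzero ideal is a minimal prime of that ideal, hence there are only finitely many such. Thus only finitely many of your candidate $Q_{j+1}$'s contain a given $P'\neq P$, and since there are finitely many $P'$, you can still choose $Q_{j+1}$ from the infinite remaining set. With this correction the inductive step goes through as you outlined.
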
 

The following lemma shows that if a local domain has a maximal saturated chain of prime ideals of length $n$, then $\widehat{A}$ must have a minimal prime ideal $P$ such that   dim$(T/P) = n$.

\begin{lemma}[\cite{Small2017}, Lemma 2.9]\label{Lemma 2.9}
    Let $(T,M)$ be a complete local ring and let $A$ be a local domain such that $\widehat{A} \cong T$. If $A$ contains a saturated chain of prime ideals from $(0)$ to $M \cap A$ of length $n$, then there exists $P$ in $\Min(T)$ such that $\dim(T/P) = n$.
\end{lemma}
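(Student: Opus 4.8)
The plan is to argue by induction on the length $n$ of the saturated chain, leaning on two standard facts: the natural map $A \to T = \widehat{A}$ is faithfully flat and local, so going-down and the dimension formula for flat local homomorphisms are available; and every quotient $T/P$ of $T$ is a complete local domain, hence universally catenary, so $\operatorname{ht}_{T/P}(\mathfrak{q}) + \dim\big((T/P)/\mathfrak{q}\big) = \dim(T/P)$ for every prime $\mathfrak{q}$ of $T/P$. The base case $n = 0$ is immediate: then $A$ is a field, $T = A$, and $(0) \in \Min(T)$ satisfies $\dim(T/(0)) = 0$.

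For the inductive step, suppose $(0) = P_0 \subsetneq P_1 \subsetneq \cdots \subsetneq P_n = M \cap A$ is a saturated chain in $A$ with $n \geq 1$; since no prime can be inserted strictly between consecutive terms of a saturated chain, $\operatorname{ht}_A(P_1) = 1$. I would then pass to the local domain $\bar{A} := A/P_1$, whose completion is $\widehat{\bar{A}} \cong T/P_1T$ because completion commutes with quotients by ideals of a Noetherian local ring. The images $P_i/P_1$ form a saturated chain of length $n - 1$ from $(0)$ to the maximal ideal of $\bar{A}$, so the inductive hypothesis produces $\bar{Q} \in \Min(T/P_1T)$ with $\dim\big((T/P_1T)/\bar{Q}\big) = n - 1$. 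Writing $\bar{Q} = Q/P_1T$, this says $Q \in \Spec(T)$ is minimal over $P_1T$ and $\dim(T/Q) = n - 1$.

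Next I would pin down the height of $Q$ in $T$, claiming $\operatorname{ht}_T(Q) = 1$. Going-down for the flat map $A \to T$ forces $Q \cap A = P_1$: we have $Q \cap A \supseteq P_1$ since $Q \supseteq P_1T$, and a strict containment would, by going-down, yield a prime of $T$ strictly inside $Q$ lying over $P_1$, hence containing $P_1T$, contradicting minimality of $Q$ over $P_1T$. Then the dimension formula for the flat local homomorphism $A_{P_1} \to T_Q$ gives $\operatorname{ht}_T(Q) = \dim T_Q = \dim A_{P_1} + \dim(T_Q/P_1T_Q) = 1 + 0 = 1$, the fiber term vanishing because $Q$ is minimal over $P_1T$. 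Now choose any $P \in \Min(T)$ with $P \subseteq Q$. Since $\operatorname{ht}_T(Q) = 1 > 0$ we get $P \subsetneq Q$, and every chain of primes of $T$ running from the minimal prime $P$ up to $Q$ has length at most $\operatorname{ht}_T(Q) = 1$, so $\operatorname{ht}_{T/P}(Q/P) = 1$. As $T/P$ is a complete local domain, it is universally catenary, so $\dim(T/P) = \operatorname{ht}_{T/P}(Q/P) + \dim(T/Q) = 1 + (n - 1) = n$, exhibiting the required $P \in \Min(T)$.

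The hard part is getting the value of $\dim(T/P)$ to be \emph{exactly} $n$ rather than merely $\geq n$: simply lifting the given chain to $T$ by going-down and taking a minimal prime underneath it yields the inequality but not the equality. The induction is precisely what supplies equality, because it reduces everything to the height-one prime $P_1$ of the domain $A$, and the flat dimension formula guarantees that such a prime extends only to height-one primes of $T$, so no dimension is ``lost'' between $P$, $Q$, and $M$. In a careful write-up I would still need to check that consecutive sub-chains of a saturated chain remain saturated, that $\widehat{A/P_1} \cong T/P_1T$, and that the identity $\operatorname{ht}_R(\mathfrak{q}) + \dim(R/\mathfrak{q}) = \dim R$ is invoked only when $R$ is a complete local domain, where it does hold.
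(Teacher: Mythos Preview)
The paper does not supply its own proof of this lemma; it is quoted from \cite{Small2017} and used as a black box, so there is no argument in the present manuscript to compare against. That said, your proof is correct. The induction on $n$, together with the flat dimension formula applied to the height-one prime $P_1$ and the catenarity of the complete local domain $T/P$, is exactly what is needed to upgrade the easy inequality $\dim(T/P)\geq n$ (which one gets from naive going-down) to the required equality. The checks you flag at the end---saturation of subchains, $\widehat{A/P_1}\cong T/P_1T$, and the identity $\operatorname{ht}(\mathfrak q)+\dim(R/\mathfrak q)=\dim R$ for a catenary local domain $R$---are all routine and hold as you use them.
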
 

The next theorem characterizes completions of noncatenary local domains. We use it to prove results about noncatenary quasi-excellent domains. In addition, we use it in Section 3 to find sufficient conditions for a complete local ring to be the completion of a quasi-excellent catenary local domain but not the completion of a universally catenary local domain.


\begin{theorem}[\cite{Small2017}, Theorem 2.10]\label{Theorem 2.10}
    Let $(T,M)$ be a complete local ring. Then $T$ is the completion of a noncatenary local domain $A$ if and only if the following conditions hold:
    \begin{enumerate}[(i)]
        \item No integer of $T$ is a zero divisor,
        \item $M$ is not in $\Ass(T)$, and
        \item There exists $P$ in $\Min(T)$ such that $1 < \dim (T/P) < \dim (T)$.
    \end{enumerate}
\end{theorem}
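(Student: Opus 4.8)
\quad I would prove the two implications separately: conditions (i)--(ii) will come from Lech's theorem, condition (iii) from \Cref{Lemma 2.9}, and the converse from \Cref{Lemma 2.8} together with a precompletion construction.

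\emph{Necessity.} Suppose $A$ is a noncatenary local domain with $\widehat{A}\cong T$. Since $A$ is a domain, $T$ is the completion of a local domain, so \Cref{Lech's Theorem} gives (i); and since a noncatenary domain is not a field, $\dim(T)=\dim(A)\ge 1$, so $M\ne(0)$ and \Cref{Lech's Theorem} also gives (ii). For (iii) the idea is to manufacture a short maximal chain in $A$. As $A$ is noncatenary there are primes $\mathfrak p\subsetneq\mathfrak q$ of $A$ joined by saturated chains of two distinct lengths $s<\ell$; prepending a saturated chain from $(0)$ to $\mathfrak p$ of some length $a$ and appending a saturated chain from $\mathfrak q$ to $M\cap A$ of some length $c$ yields saturated chains from $(0)$ to $M\cap A$ of lengths $a+s+c$ and $a+\ell+c$. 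The longer has length at most $\dim(A)$, so $n:=a+s+c$ satisfies $n<\dim(A)=\dim(T)$; also $n>1$, since a local domain with a length-one saturated chain from $(0)$ to its maximal ideal has dimension one and is therefore catenary. Feeding the length-$n$ saturated chain from $(0)$ to $M\cap A$ into \Cref{Lemma 2.9} produces $P\in\Min(T)$ with $\dim(T/P)=n$ and $1<n<\dim(T)$, which is (iii).

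\emph{Sufficiency.} Conversely, assume (i)--(iii) and fix $P\in\Min(T)$ with $m:=\dim(T/P)$ and $1<m<\dim(T)=:d$. Since $M\notin\Ass(T)$, \Cref{Lemma 2.8} supplies a saturated chain $P\subsetneq Q_1\subsetneq\cdots\subsetneq Q_{m-1}\subsetneq M$ in $T$ with every $Q_i\notin\Ass(T)$ and with $P$ the only minimal prime of $T$ contained in each $Q_i$. The plan is to build a local domain $A$ with $\widehat{A}\cong T$ such that $P\cap A=(0)$, each $Q_i\cap A\ne(0)$, and the contracted chain $(0)\subsetneq Q_1\cap A\subsetneq\cdots\subsetneq Q_{m-1}\cap A\subsetneq M\cap A$ is still saturated in $A$. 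Given such an $A$, it has a maximal saturated chain of length $m$; since $\dim(A)=d>m$ it also has one of length $d$, so $A$ is noncatenary, as required.

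When $T$ is reduced, \Cref{Lemma 8} produces exactly such an $A$: the associated prime $P$ contracts to $(0)$, while each $Q_i$ and $M$ lie outside $\Ass(T)$ and hence, by \Cref{bijection}, contract to distinct nonzero primes of $A$ under an inclusion-preserving map whose inverse is $\mathfrak s\mapsto\mathfrak sT$ and which satisfies $(\mathfrak sT)\cap A=\mathfrak s$. If some prime $\mathfrak s$ of $A$ fit strictly between two consecutive terms of the contracted chain, then $\mathfrak sT$ would sit strictly between the corresponding terms of the chain in $T$ (for the bottom step using that $P$ is the only minimal prime of $T$ below $Q_1$, to get $P\subsetneq\mathfrak sT\subsetneq Q_1$; for the top step using $(M\cap A)T=M$), contradicting saturatedness in $T$. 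For a possibly non-reduced $T$, I would replace \Cref{Lemma 8} by the analogous precompletion construction available for any complete local ring in which no integer is a zerodivisor and whose maximal ideal is not associated, again arranging $P\cap A=(0)$ and the $Q_i\cap A$ distinct, nonzero, and with no refinement possible; the verification that saturatedness descends is the same bookkeeping. I expect this construction for non-reduced $T$ — controlling the generic formal fiber of a precompletion finely enough that the chosen chain descends without becoming refinable — to be the main obstacle; everything else is a dimension count.
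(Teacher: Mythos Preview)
This theorem is not proved in the present paper; it is quoted from \cite{Small2017} as a preliminary result and used as a black box. So there is no in-paper proof to compare against.

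On the merits of your argument: the necessity direction is correct and essentially forced. Your handling of (iii) --- extending a witness to noncatenarity into a maximal saturated chain of intermediate length and then invoking Lemma~\ref{Lemma 2.9} --- is the right idea, and your check that $n>1$ (a saturated chain $(0)\subsetneq M\cap A$ forces $\dim A=1$, hence catenarity) is sound.

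The sufficiency direction has a genuine gap, and you name it yourself. Your argument for reduced $T$ via Lemma~\ref{Lemma 8} and the bijection of Remark~\ref{bijection} is correct, and is in fact exactly the mechanism this paper uses for its quasi-excellent analogue, Theorem~\ref{specified length chains}. But Lemma~\ref{Lemma 8} requires $T$ to be reduced, whereas Theorem~\ref{Theorem 2.10} assumes only $M\notin\Ass(T)$, which is strictly weaker. The real content of the sufficiency proof in \cite{Small2017} is precisely the precompletion construction for possibly non-reduced $T$ that you gesture at but do not supply: one needs a local domain $A$ with $\widehat A=T$ whose formal fiber over $(0)$ is controlled finely enough that the chain from Lemma~\ref{Lemma 2.8} descends to a saturated chain in $A$, and no lemma stated in this paper provides that when $T$ has nilpotents. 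Without that construction, your sufficiency argument covers only the reduced case.
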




We end this section with a result that characterizes completions of countable local domains.  We use it to prove  our main results in Section 4.

\begin{theorem}[\cite{Barrett}, Corollary 2.15] \label{Yu 2.2}
    Let $(T,M)$ be a complete local ring. Then $T$ is the completion of a countable local domain if and only if
    \begin{enumerate}[(i)]
        \item no integer is a zero divisor of $T$, 
        \item unless equal to $(0)$, $M\notin \Ass(T)$, and 
        \item $T/M$ is countable.
    \end{enumerate}
\end{theorem}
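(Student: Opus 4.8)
The plan is to prove necessity by a short application of standard commutative algebra and sufficiency by a countable refinement of Lech's construction. For necessity, suppose $A$ is a countable local domain with $\widehat{A} \cong T$; then $A \to \widehat{A} = T$ is injective and faithfully flat. For (i): a nonzero integer is either $0$ in $A$ (in positive characteristic) or a nonzerodivisor of $A$ since $A$ is a domain, and in the latter case flatness of $A \to T$ keeps it a nonzerodivisor of $T$, so no integer of $T$ is a zerodivisor. For (ii): if $\dim(T) = 0$ then $A$ is a zero--dimensional local domain, hence a field, so $T = A$ is a field and $M = (0)$; if $\dim(T) \geq 1$ then the maximal ideal $\mathfrak{m}_A$ of $A$ is nonzero, hence contains a nonzerodivisor, so $\operatorname{depth}(A) \geq 1$, and since completion preserves depth we get $\operatorname{depth}(T) \geq 1$, i.e. $M \notin \Ass(T)$. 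For (iii): $T/M \cong \widehat{A}/\mathfrak{m}_A\widehat{A} \cong A/\mathfrak{m}_A$ is a quotient of the countable ring $A$, hence countable.

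For sufficiency, assume (i)--(iii). If $\dim(T) = 0$ then $T$ is Artinian local and (ii) forces $M = (0)$, so $T$ is a field, countable by (iii), and equal to its own completion; take $A = T$. Assume now $\dim(T) \geq 1$. I would build $A$ as an increasing union $A = \bigcup_{n \geq 0} R_n$ of countable quasi-local subrings $(R_n, M \cap R_n)$ of $T$, imitating the transfinite construction behind \cite{Lech} but carrying along a countability bound, which is available precisely because $T/M$ is countable. Starting from the prime subring of $T$ localized at the contraction of $M$ --- whose nonzero elements are nonzerodivisors of $T$ by (i) --- I would arrange that each $R_n$ is a domain with $R_n \cap \mathfrak{q} = (0)$ for every $\mathfrak{q} \in \Min(T)$, and that the union $A$ satisfies: (a) $A \to T/M^2$ is surjective; (b) every finitely generated ideal of $A$ contracts from $T$, in the form needed for the standard completion criterion; and (c) $A \cap \mathfrak{q} = (0)$ for every $\mathfrak{q} \in \Min(T)$.

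The chain is obtained by arranging the countably many relevant tasks --- representatives of the countable set $T/M^2$ (countable since $T/M$ is countable and $M/M^2$ is a finite $T/M$-module), finitely generated ideals of the $R_n$ to be closed up, and the finitely many minimal primes of $T$ to be avoided --- into a single sequence of length $\omega$ and handling one per step, each step adjoining only countably many elements of $T$ obtained by repeatedly solving finite systems of congruences modulo powers of $M$ while staying outside the finitely many relevant primes (a prime-avoidance argument in the complete local ring $T$, where condition (ii), namely $M \notin \Ass(T)$, supplies the room needed to avoid $\Ass(T)$). Then $A$ is countable; (a) shows that $\mathfrak{m}_A = M \cap A$ generates $M/M^2$, so $\widehat{A} \to T$ is surjective; (a) and (b) together, via the completion criterion used in the constructions of \cite{Lech} and \cite{Teresa}, give that $A$ is Noetherian and $\widehat{A} \cong T$; and (c) makes $A$ a domain, since any two nonzero elements of $A$ lie outside every minimal prime of $T$ and hence have nonzero product in $T$.

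The step I expect to be the main obstacle is (b), guaranteeing that the limit ring $A$ is Noetherian. Closing up one finitely generated ideal introduces new elements that generate new ideals which must in turn be closed, and each newly adjoined element must simultaneously stay outside every associated prime of $T$ (to preserve the domain property) and meet the congruence requirements for (a); fitting all of this into one construction that never leaves the countable world is the technical core of the argument. It is here that all three hypotheses are genuinely used: (i) to start from a prime subring avoiding the zerodivisors of $T$, (ii) to have enough room in $T$ to avoid $\Ass(T)$ at every step, and (iii) to keep every stage, and therefore $A$, countable.
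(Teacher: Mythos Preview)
The paper does not give its own proof of this statement: it is quoted verbatim from \cite{Barrett} (Corollary 2.15) as a preliminary result and is used without proof. So there is no paper argument to compare against.

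That said, your outline is the standard one and is essentially what the original reference carries out: a countable version of Lech's construction, with countability propagated because $T/M$ countable forces $T/M^n$ countable for all $n$, so at each step only countably many congruence conditions need to be satisfied. Your necessity arguments are correct. One small slip: in (c) you ask for $A\cap\mathfrak q=(0)$ for every $\mathfrak q\in\Min(T)$, but to make $A$ a domain you need $A\cap\mathfrak q=(0)$ for every $\mathfrak q\in\Ass(T)$, since zerodivisors of $T$ are exactly $\bigcup\Ass(T)$; you correctly switch to $\Ass(T)$ in the last paragraph, so this is only a notational inconsistency. You are right that the heart of the matter is (b), closing up ideals while preserving both countability and the prime-avoidance condition; this is genuinely where the work lies, and your proposal as written is a plan rather than a proof---the interleaving of ``close this ideal'' and ``hit this coset of $M^2$'' and ``avoid $\Ass(T)$'' has to be made precise with a coset-type prime-avoidance lemma valid in complete local rings (using $M\notin\Ass(T)$). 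If you want to turn this into a self-contained argument, the lemmas in \cite{Teresa} (your Lemmas \ref{Yu 3.6new} and \ref{Yu 3.8new}) already package exactly these steps.
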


\section{Completions of quasi-excellent local domains}

In this section we first characterize completions of quasi-excellent local domains in the characteristic zero case. We begin with a preliminary lemma showing that, with the additional assumption that the complete local ring $T$ has characteristic zero, the domain $A$ given by Lemma \ref{Lemma 8} is quasi-excellent.

\begin{lemma}\label{showingquasi}
Let $(T,M)$ be a reduced complete local ring of characteristic zero and of dimension at least one.  Suppose no integer of $T$ is a zero divisor.  Then the domain $A$ given by Lemma \ref{Lemma 8} is quasi-excellent.
\end{lemma}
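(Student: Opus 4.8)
The plan is to verify the definition of quasi-excellence for $A$ directly, using the simplifications for local rings recorded in Remark~\ref{quasi-excllent}. Since $A$ is local, by that remark it suffices to show: for every $P \in \Spec(A)$ and every purely inseparable finite field extension $L$ of $k(P) = A_P/PA_P$, the ring $\widehat{A} \otimes_A L = T \otimes_A L$ is regular. I would split this into the cases $P \neq (0)$ and $P = (0)$, which behave quite differently.

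For $P \neq (0)$, I expect the statement to fall out of property~(ii) of Lemma~\ref{Lemma 8}. Given a finite field extension $L$ of $k(P)$, the ring maps $A \to k(P) \to L$ yield
\[
T \otimes_A L \;\cong\; \big(T \otimes_A k(P)\big) \otimes_{k(P)} L \;\cong\; k(P) \otimes_{k(P)} L \;\cong\; L,
\]
using $T \otimes_A k(P) \cong k(P)$. As $L$ is a field it is regular, so the required condition holds at every nonzero prime. One should not try to shortcut this by claiming $k(P)$ is perfect: although $A$ has characteristic zero, its residue fields $k(P)$ need not, so it is the base-change identity above that actually does the work here.

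For $P = (0)$ the residue field is $k((0)) = QF(A)$, and $T \otimes_A QF(A)$ is the generic formal fiber ring of $A$. I would first note that, since $A$ embeds in its completion $T$ and $T$ has characteristic zero, $A$ has characteristic zero; being a domain, $QF(A)$ then contains $\mathbb{Q}$ and so is perfect, whence its only purely inseparable finite field extension is $QF(A)$ itself. Thus the task reduces to showing that $B := T \otimes_A QF(A)$ is regular. Now $B = S^{-1}T$ with $S = A \setminus \{0\}$, so $B$ is Noetherian and reduced (a localization of the Noetherian reduced ring $T$), and its primes correspond to the primes $Q$ of $T$ with $Q \cap A = (0)$, which by Lemma~\ref{Lemma 8}(iii) (in the form recalled just before that lemma) are exactly the minimal primes of $T$. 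Since minimal primes are pairwise incomparable, $B$ has Krull dimension zero; a reduced Noetherian ring of dimension zero is Artinian and hence a finite product of fields, so $B$ is regular.

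Combining the two cases, $T \otimes_A L$ is regular for every $P \in \Spec(A)$ and every purely inseparable finite extension $L$ of $k(P)$, so $A$ is quasi-excellent by Remark~\ref{quasi-excllent}. I do not anticipate a serious obstacle: most of the content is already packaged inside Lemma~\ref{Lemma 8}. The two points to be careful about are (a) handling $P \neq (0)$ via the identity $T \otimes_A L \cong L$ rather than via a perfectness claim about $k(P)$ that is false in general, and (b) remembering that perfectness is available precisely at $P = (0)$, where $k(P) = QF(A) \supseteq \mathbb{Q}$, so that the purely inseparable criterion collapses to regularity of the single ring $T \otimes_A QF(A)$.
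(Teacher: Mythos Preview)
Your proof is correct and follows essentially the same route as the paper: invoke Remark~\ref{quasi-excllent} to reduce to purely inseparable extensions, dispose of nonzero primes via $T\otimes_A k(P)\cong k(P)$ from Lemma~\ref{Lemma 8}(ii), and at $P=(0)$ use characteristic zero to reduce to the generic formal fiber $S^{-1}T$, whose primes are the minimal primes of $T$. The only cosmetic difference is that the paper checks regularity of $S^{-1}T$ by localizing at each prime and observing $T_Q$ is a field, whereas you phrase it as ``reduced Noetherian of dimension zero, hence a finite product of fields''; these are equivalent.
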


\begin{proof}
    The domain $A$ given by Lemma \ref{Lemma 8} satisfies the properties that $\widehat{A} \cong T$, for every non-zero prime ideal $P$ of $A$, $T \otimes_A k(P) \cong k(P)$ where $k(P) = A_P/PA_P$, and, if $Q$ is a prime ideal of $T$ then $Q \cap A = (0)$ if and only if $Q$ is a minimal prime ideal of $T$.  By Remark \ref{quasi-excllent}, in order to show that $A$ is quasi-excellent, it suffices to show that for all $P \in \Spec(A), T \otimes_A L$ is regular for every purely inseparable finite field extension $L$ of $k(P)$. We treat this in two cases.

If $(0) \neq P \in \Spec(A)$, and $L$ is a finite field extension of $k(P)$, then we have $$T \otimes_A L \cong T \otimes_A k(P) \otimes_{k(P)} L \cong k(P) \otimes_{k(P)} L \cong L,$$ which is a field. Therefore, $T \otimes_A L$ is regular. Now, let $P = (0)$, and let $L$ be a purely inseparable finite field extension of $k(P)$. Since $T$ has characteristic zero, $k((0))$ must also have characteristic zero and so $L = k(P) = k((0))$.  Now $T \otimes_A L = T \otimes_A k((0)) \cong S^{-1} T$, where $S=A \setminus  \{0\}$. Recall that $Q \in \Spec(T)$ satisfies $Q \cap A = (0)$ if and only if $Q$ is a minimal prime ideal of $T$. 
 Therefore, the prime ideals of $S^{-1}T$ are of the form $Q^e$ where $Q$ is a minimal prime ideal of $T$ and $Q^e$ is the image of $Q$ under the natural map $T \longrightarrow S^{-1}T$.  Now $(S^{-1}T)_{Q^e} \cong T_Q$, and since $T$ is reduced and $Q$ is a minimal prime ideal of $T$, we have that $T_Q$ is a field and hence a regular local ring. It follows that $T \otimes_A L$ is a regular ring as desired.
\end{proof}

We are now ready to characterize completions of quasi-excellent domains in the characteristic zero case.

\begin{theorem}\label{quasi-excellent characterization}
 Let $(T,M)$ be a complete local ring of characteristic zero. Then $T$ is the completion of a quasi-excellent local domain if and only if
 \begin{enumerate}[(i)]
     \item No integer of $T$ is a zero divisor, and \label{qec - i}
     \item $T$ is reduced. \label{qec - ii}
 \end{enumerate}
\end{theorem}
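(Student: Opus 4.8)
The plan is to prove the two directions separately, and to reduce the hard direction to the lemmas already available.

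\textbf{Necessity of (i) and (ii).} Suppose $T = \widehat{A}$ for a quasi-excellent local domain $A$. Condition (i) follows immediately from Lech's theorem (\cref{Lech's Theorem}), since any precompletion of a domain already forces no integer of $T$ to be a zero divisor. For condition (ii), I would use that $A$ is quasi-excellent, hence in particular reduced (a domain), and that quasi-excellence of a local ring says $\widehat{A}\otimes_A L$ is regular for every finite field extension $L$ of $A_P/PA_P$ at every prime $P$; taking $P=(0)$ and $L = QF(A)$ (legitimate since $\mathrm{char}\,T=0$ forces the residue field $k((0))=QF(A)$ to be perfect in the relevant sense, so we only need the separable/trivial extension), the ring $\widehat{A}\otimes_A QF(A) = S^{-1}T$ with $S = A\setminus\{0\}$ is regular, hence reduced. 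Then $A \longrightarrow \widehat A$ is faithfully flat with reduced closed fiber? No — the cleaner route: $A$ is reduced and, being quasi-excellent, its formal fibers are regular (in particular reduced), so by \cref{Matsumura 32.9}(ii) applied to the faithfully flat local map $A\longrightarrow \widehat A = T$, $T$ is reduced. This gives (ii).

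\textbf{Sufficiency.} Assume (i) and (ii). Split on $\dim(T)$. If $\dim(T)=0$, then $T$ is a reduced complete local ring of dimension zero, hence Artinian and reduced, hence a finite product of fields; but a local ring that is a product of fields must be a single field, so $T$ is a field. A field is excellent (it is its own completion and trivially satisfies the regularity conditions), hence quasi-excellent, so $T$ is the completion of a quasi-excellent local domain, namely itself. If $\dim(T)\geq 1$, then $T$ satisfies exactly the hypotheses of \cref{Lemma 8}, so there is a local domain $A$ with $\widehat A = T$ enjoying properties (i)--(iv) of that lemma; and since additionally $\mathrm{char}\,T = 0$, \cref{showingquasi} tells us that this very $A$ is quasi-excellent. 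Hence $T$ is the completion of a quasi-excellent local domain.

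\textbf{Expected main obstacle.} The sufficiency direction is essentially bookkeeping given \cref{Lemma 8} and \cref{showingquasi}, so it is not where the difficulty lies. The subtle point is the necessity of reducedness: one must be careful that ``quasi-excellent'' only directly controls $\widehat A \otimes_A L$ for field extensions of residue fields at primes of $A$, not the fibers over arbitrary primes of $\widehat A$; the argument must go through the generic formal fiber (the $P=(0)$ case, using $\mathrm{char} = 0$ to pin down $L$) together with \cref{Matsumura 32.9}(ii), and one should double-check that the fibers of $A \longrightarrow \widehat A$ being regular is a genuine consequence of quasi-excellence in the form stated in the paper. Once that is in hand, the theorem follows by assembling the pieces.
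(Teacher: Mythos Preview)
Your proposal is correct and follows essentially the same route as the paper: necessity of (i) via Lech's theorem, necessity of (ii) via \cref{Matsumura 32.9}(ii) applied to the faithfully flat map $A\to\widehat A$ using that quasi-excellence makes all formal fibers regular (hence reduced), and sufficiency by splitting on $\dim T$ and invoking \cref{Lemma 8} together with \cref{showingquasi} when $\dim T\ge 1$. Your initial detour through only the generic formal fiber would not have been enough by itself, but you caught this and landed on the same argument the paper gives.
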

\begin{proof}
Suppose $A$ is a quasi-excellent local domain with completion $T$. By Theorem \ref{Lech's Theorem}, since $A$ is a local domain, its completion $T$ has no integer zero divisors.

To show $T$ is reduced, first note that $T$ is a faithfully flat extension of $A$. Thus, by Corollary \ref{Matsumura 32.9}, it suffices to show that both $A$ and the fiber rings of $A \longrightarrow T$ are reduced. Since $A$ is a domain, it is reduced. We now show that the fiber ring $T \otimes_A k(P)$ is reduced for all \mbox{$P \in \Spec(A)$} where $k(P)$ is defined as $A_P/PA_P$. Since $A$ is a quasi-excellent, $(T \otimes_A k(P))_Q$ is a regular local ring for every \mbox{$Q \in \Spec (T \otimes_A k(P))$}. This implies that $(T \otimes_A k(P))_Q$ has no non-zero nilpotent elements for all $P \in \Spec A$ and for all $Q \in \Spec(T \otimes_A k(P))$. It follows that the fiber rings of $A \rightarrow T$ have no non-zero nilpotent elements and so $T$ is reduced.

Conversely, suppose $(T,M)$ is a reduced complete local ring of characteristic zero such that no integer is a zero divisor. If $\dim(T)=0$, then $T$ is a field, and is therefore its own completion. As complete local rings are excellent, the result follows in this case.  Now suppose $\dim(T) \geq 1$ and let $A$ be the domain given by Lemma \ref{Lemma 8}. By Lemma \ref{showingquasi}, $A$ is quasi-excellent.
\end{proof}

Now that we have characterized the completions of quasi-excellent local domains, we ask more specific questions. For example, when does a complete local ring have a quasi-excellent local domain precompletion that is not catenary? Theorem \ref{specified length chains} provides insight into an answer to this question by showing that certain complete local rings have quasi-excellent precompletions that have maximal saturated chains of prime ideals of different lengths and hence, are not catenary. 


\begin{theorem}\label{specified length chains}
 Let $(T,M)$ be a complete local ring of characteristic zero and let $m_i$ for $i = 1,2, \ldots ,n$ be integers such that $1 < m_1 < \dots < m_n < \dim (T)$. Then $T$ is the completion of a quasi-excellent local domain with maximal saturated chains of prime ideals of lengths $m_1,\ldots,m_n$ if and only if
 \begin{enumerate}[(i)]
     \item No integer of $T$ is a zero divisor, \label{slc - i}
     \item $T$ is reduced, and \label{slc - ii}
     \item There exists prime ideals $P_1,\ldots,P_n$ in $\Min (T)$ such that $\dim (T/P_i) = m_i$ for $1 \leq i \leq n$.\label{slc - iii}
 \end{enumerate}
\end{theorem}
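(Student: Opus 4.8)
The plan is to prove the two directions separately, with the forward direction following quickly from earlier results and the reverse direction requiring a careful construction built on Lemma~\ref{Lemma 8}.

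First I would handle necessity. Suppose $A$ is a quasi-excellent local domain with $\widehat{A} \cong T$ having maximal saturated chains of prime ideals of lengths $m_1, \ldots, m_n$. Conditions (i) and (ii) are immediate from Theorem~\ref{quasi-excellent characterization}. For condition (iii), I would apply Lemma~\ref{Lemma 2.9} to each maximal saturated chain in turn: since $A$ has a saturated chain from $(0)$ to $M \cap A$ of length $m_i$, there exists $P_i \in \Min(T)$ with $\dim(T/P_i) = m_i$. This gives all $n$ minimal primes at once.

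The substance is in proving sufficiency. Assume (i), (ii), (iii). Since $\dim(T) > m_n > 1 \geq 1$, we are in the case $\dim(T) \geq 1$, so Lemma~\ref{Lemma 8} yields a local domain $A$ with $\widehat{A} = T$ satisfying properties (i)--(iv) of that lemma, and by Lemma~\ref{showingquasi} (noting $T$ is reduced of characteristic zero), $A$ is quasi-excellent. It remains to show $A$ has maximal saturated chains of prime ideals of lengths $m_1, \ldots, m_n$. Fix $i$. Since $T$ is reduced, $M \notin \Ass(T)$ unless $T$ is a field, and here $\dim(T) \geq 1$ so $M \notin \Ass(T)$; thus I can apply Lemma~\ref{Lemma 2.8} to the minimal prime $P_i$ with $\dim(T/P_i) = m_i$ to obtain a saturated chain $P_i \subsetneq Q_1 \subsetneq \cdots \subsetneq Q_{m_i - 1} \subsetneq M$ in $T$ where each $Q_j \notin \Ass(T)$ and $P_i$ is the unique minimal prime of $T$ contained in each $Q_j$. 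Now I would invoke the bijection of Remark~\ref{bijection}: since each $Q_j$ is not an associated (equivalently, minimal) prime of $T$, it corresponds to a nonzero prime ideal $Q_j \cap A$ of $A$, and in fact $Q_j = (Q_j \cap A)T$ with $Q_j$ the only prime of $T$ lying over $Q_j \cap A$. Setting $q_j := Q_j \cap A$, I would argue that $(0) \subsetneq q_1 \subsetneq \cdots \subsetneq q_{m_i - 1} \subsetneq M \cap A$ is a saturated chain in $A$ of length $m_i$: the inclusions are strict because the $Q_j$ are distinct and each is the unique prime over its contraction; and it is saturated because the correspondence $\mathfrak{q} \mapsto \mathfrak{q}T$ (for $\mathfrak{q}$ nonzero, with the $T$-side prime lying uniquely over it and not minimal) preserves and reflects the covering relation — a strict inclusion $q \subsetneq q'$ with nothing in between on the $A$-side would, going up, give $Q \subsetneq Q'$ with the property that the only minimal prime below $Q'$ is $P_i$, so any prime $Q''$ strictly between them would also have $P_i$ as its unique minimal prime below it and hence contract to a prime of $A$ strictly between $q$ and $q'$. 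One has to be a little careful at the bottom ($q_1$ covers $(0)$) and top ($M \cap A$ covers $q_{m_i-1}$), using that $Q_1$ minimally contains $P_i$ and that $M = M \cap A$ lies over... more precisely that $M$ is the unique maximal ideal and $M \cap A$ the unique maximal ideal of $A$. This produces a maximal saturated chain of length $m_i$ in $A$; doing this for each $i$ finishes the proof.

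The main obstacle is the saturatedness argument in the reverse direction: translating the saturated chain in $T$ produced by Lemma~\ref{Lemma 2.8} into a genuinely \emph{saturated} chain in $A$. The contraction map on prime ideals need not in general reflect saturation, so the crucial point is to exploit the two special features guaranteed by Lemma~\ref{Lemma 2.8} — namely that every $Q_j$ avoids $\Ass(T)$ and has $P_i$ as its \emph{unique} minimal prime below it — together with the precise form of the bijection in Remark~\ref{bijection} (each nonzero prime of $A$ has a unique prime of $T$ over it, given by extension). Verifying that these hypotheses propagate to any hypothetical intermediate prime, so that no gap in the $T$-chain can hide in the $A$-chain and vice versa, is the technical heart of the argument; the quasi-excellence of $A$ itself is essentially free once Lemma~\ref{showingquasi} is in hand.
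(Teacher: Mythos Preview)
Your proposal is correct and follows essentially the same route as the paper: the forward direction via Theorem~\ref{quasi-excellent characterization} and Lemma~\ref{Lemma 2.9}, and the reverse via Lemma~\ref{Lemma 8}, Lemma~\ref{showingquasi}, and the bijection of Remark~\ref{bijection} to transport saturated chains from $T$ down to $A$. Your explicit use of Lemma~\ref{Lemma 2.8} to arrange that $P_i$ is the \emph{unique} minimal prime below each $Q_j$ is in fact more careful than the paper's own proof, which takes an arbitrary saturated chain from $P_i$ to $M$ and appeals directly to Remark~\ref{bijection} for saturation of the contracted chain; your added hypothesis is precisely what makes the bottom step $(0)\subsetneq Q_1\cap A$ go through, since without it a non-minimal prime of $T$ lying below $Q_1$ over a different minimal prime could produce an intermediate prime in $A$.
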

\begin{proof}
        Suppose $A$ is a quasi-excellent domain such that $\widehat{A} \cong T$ and that there exist maximal saturated chains of prime ideals of lengths $m_1,\dots,m_n$ in $A$. It follows from Theorem \ref{quasi-excellent characterization} that conditions \eqref{slc - i} and \eqref{slc - ii} are satisfied. By Lemma \ref{Lemma 2.9} there exists $P_i \in \Min(T)$ such that $\dim(T/P_i)=m_i$ for each $i \in \{1,\ldots,n\}$. Therefore, condition \eqref{slc - iii} is also satisfied.

        Now suppose $T$ is a complete local ring of characteristic zero satisfying conditions \eqref{slc - i}-\eqref{slc - iii}. By condition (iii), there exist saturated chains of prime ideals of $T$
\begin{align*}
            P_i\subsetneq Q_{i,1}\subsetneq \dots \subsetneq Q_{i, m_i-1} \subsetneq M
        \end{align*}
for each $i \in \{1,2, \ldots ,n\}$.
        Since dim$(T/P_1) > 1$, we have dim$(T) > 1$. Let $A$ be the local domain given by Lemma \ref{Lemma 8}, and note that by Lemma \ref{showingquasi}, $A$ is quasi-excellent. Intersecting these chains with $A$ using Remark \ref{bijection} and the fact that $P_i \cap A = (0)$ for all $i \in \{1,2, \ldots ,n\}$, we obtain $n$ chains of prime ideals of $A$
$$(0)\subsetneq Q_{i,1} \cap A\subsetneq \dots \subsetneq Q_{i, m_i-1} \cap A \subsetneq M\cap A$$
        for all $i \in \{1,2, \ldots ,n\}$. By Remark \ref{bijection}, each of these chains is saturated, and it follows that $A$ has maximal saturated chains of prime ideals of lengths $m_1, \ldots ,m_n$.
        %
        %
       %
        %
\end{proof}

If $T$ satisfies the conditions in Theorem \ref{specified length chains} with at least one $m_i$ such that $1 < m_i < n = \dim (T)$, then the domain $A$ obtained from Theorem \ref{specified length chains} will be a quasi-excellent local domain with a maximal saturated chain of prime ideals of length $m_i$, and, since dim$(A) = \mbox{dim}(T)$, a maximal saturated chain of prime ideals of length $n$. It follows that $A$ is not catenary.  In this way, Theorem \ref{specified length chains} can be used to construct quasi-excellent local domains that are not catenary with some control on the lengths of the saturated maximal chains of prime ideals.  We note that it is not particularly difficult to find complete local rings that satisfy the conditions of Theorem \ref{specified length chains}.  For example, $T=\mathbb{Q}[[x,y,z,w,t]]/((x) \cap (y,z))$ has a precompletion that is a quasi-excellent local domain with maximal saturated chains of prime ideals of length three and of length four.



We now state, as a corollary, the special case of Theorem \ref{specified length chains} for $n = 1$.

\begin{corollary} \label{corrolary one m}
Let $(T,M)$ be a complete local ring of characteristic zero and let $m$ be an integer with $1 < m\leq \mbox{dim}(T)$. Then $T$ is the completion of a quasi-excellent local domain with a maximal saturated chain of prime ideals of length $m$ if and only if
 \begin{enumerate}[(i)]
     \item No integer of $T$ is a zero divisor, \label{com - i}
     \item $T$ is reduced, and \label{com - ii}
     \item There exists a prime ideal $P$ in $\Min (T)$ such that $\dim (T/P) = m$. \label{com - iii}
 \end{enumerate}
\end{corollary}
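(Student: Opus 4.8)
The plan is to prove the two implications separately, offloading most of the work onto results already established. For the forward direction, suppose $A$ is a quasi-excellent local domain with $\widehat{A} \cong T$ possessing a maximal saturated chain of prime ideals of length $m$. Then conditions (i) and (ii) are immediate from Theorem \ref{quasi-excellent characterization}, and condition (iii) follows by applying Lemma \ref{Lemma 2.9} to that chain, which produces a $P \in \Min(T)$ with $\dim(T/P) = m$.

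For the converse, assume (i)--(iii); since $m > 1$ we have $\dim(T) \geq 2$, and I would split on whether $m < \dim(T)$ or $m = \dim(T)$. If $m < \dim(T)$, then $1 < m < \dim(T)$ and Theorem \ref{specified length chains} applies verbatim with $n = 1$ and $m_1 = m$, delivering a quasi-excellent local domain $A$ with $\widehat{A} \cong T$ and a maximal saturated chain of prime ideals of length $m$. If $m = \dim(T)$, then Theorem \ref{quasi-excellent characterization} (using (i) and (ii) only) already gives a quasi-excellent local domain $A$ with $\widehat{A} \cong T$; since $\dim(A) = \dim(\widehat{A}) = \dim(T) = m$ and $A$ is a local (Noetherian) domain of finite Krull dimension, a chain of prime ideals of $A$ of maximal length $m$ can be neither refined nor extended, hence is a saturated chain from $(0)$ (the unique minimal prime) to the maximal ideal $M \cap A$, of length $m$. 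Note that condition (iii) is automatically satisfied when $m = \dim(T)$, because $\dim(T) = \max\{\dim(T/P) : P \in \Min(T)\}$, so the statement genuinely covers this case even though Theorem \ref{specified length chains} excludes it.

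The only point requiring care — and the reason the corollary is not literally the $n = 1$ instance of Theorem \ref{specified length chains} — is precisely the boundary case $m = \dim(T)$; as noted this turns out to be the easy case, with the desired chain forced by $\dim(A) = \dim(T)$, so I do not anticipate a genuine obstacle. If a uniform treatment were preferred, one could instead always take the domain $A$ of Lemma \ref{Lemma 8} (quasi-excellent by Lemma \ref{showingquasi}), apply Lemma \ref{Lemma 2.8} to a minimal prime $P$ of $T$ with $\dim(T/P) = m$ — that lemma imposes no constraint forcing $\dim(T/P) < \dim(T)$ — and then intersect the resulting saturated chain $P \subsetneq Q_1 \subsetneq \cdots \subsetneq Q_{m-1} \subsetneq M$ of $T$ with $A$, using the bijection of Remark \ref{bijection} exactly as in the proof of Theorem \ref{specified length chains} to conclude that $(0) \subsetneq Q_1 \cap A \subsetneq \cdots \subsetneq Q_{m-1} \cap A \subsetneq M \cap A$ is a maximal saturated chain of length $m$ in $A$.
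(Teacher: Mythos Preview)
Your proposal is correct and mirrors the paper's proof almost exactly: both arguments split on whether $m < \dim(T)$ (invoking Theorem \ref{specified length chains}) or $m = \dim(T)$ (invoking Theorem \ref{quasi-excellent characterization} and using $\dim(A) = \dim(T)$ to force a maximal saturated chain of length $m$). The only cosmetic difference is that the paper handles both implications via Theorem \ref{specified length chains} in the $m < \dim(T)$ case, whereas you treat the forward direction uniformly via Theorem \ref{quasi-excellent characterization} and Lemma \ref{Lemma 2.9}; your additional ``uniform'' alternative is a nice observation but not needed.
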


\begin{proof}
The result follows from Theorem \ref{specified length chains} if $m < \mbox{dim}(T)$. So assume $m = \mbox{dim}(T)$. If $T$ is the completion of a quasi-excellent local domain with a maximal saturated chain of prime ideals of length $m$, then by Theorem \ref{quasi-excellent characterization}, the first two conditions are satisfied.  Since dim$(T) = m$, there must be a minimal prime ideal $P$ of $T$ such that dim$(T/P) = m$.  If the three conditions hold for $T$, then, by Theorem \ref{quasi-excellent characterization}, $T$ is the completion of a quasi-excellent local domain $A$.  Since dim$(A) = \mbox{dim}(T)$, $A$ must have a maximal saturated chain of prime ideals of length $m$.
\end{proof}

Our results allow us to classify completions of noncatenary quasi-excellent domains in the characteristic zero case.

\begin{corollary} \label{noncatcompletions}
Let $(T,M)$ be a complete local ring of characteristic zero. Then $T$ is the completion of a noncatenary quasi-excellent local domain if and only if
 \begin{enumerate}[(i)]
     \item No integer of $T$ is a zero divisor, 
     \item $T$ is reduced, and 
     \item There exists a prime ideal $P$ in $\Min(T)$ such that $1 < \dim (T/P) < \dim (T)$.
 \end{enumerate}
\end{corollary}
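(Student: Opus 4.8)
The plan is to obtain both implications by combining Theorem~\ref{Theorem 2.10}, Theorem~\ref{quasi-excellent characterization}, and Corollary~\ref{corrolary one m}; no new construction is needed.

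For the forward direction, suppose $A$ is a noncatenary quasi-excellent local domain with $\widehat{A}\cong T$. I would first apply Theorem~\ref{Theorem 2.10} to the noncatenary local domain $A$: this immediately yields that no integer of $T$ is a zero divisor and that there is a prime ideal $P\in\Min(T)$ with $1<\dim(T/P)<\dim(T)$, i.e.\ conditions (i) and (iii). Since $A$ is moreover quasi-excellent, Theorem~\ref{quasi-excellent characterization} gives that $T$ is reduced, which is condition (ii).

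For the reverse direction, assume (i)--(iii) hold. Pick $P\in\Min(T)$ with $1<\dim(T/P)<\dim(T)$ and set $m=\dim(T/P)$, so that $1<m<\dim(T)$. Because $T$ is reduced, has no integer zero divisor, and has a minimal prime ideal $P$ with $\dim(T/P)=m$, Corollary~\ref{corrolary one m} produces a quasi-excellent local domain $A$ with $\widehat{A}\cong T$ that has a maximal saturated chain of prime ideals of length $m$. To finish I would argue that $A$ is noncatenary: since $\dim(A)=\dim(\widehat{A})=\dim(T)$, there is a chain of prime ideals of $A$ from $(0)$ to its maximal ideal of length $\dim(T)$, which refines to a maximal saturated chain of length $\dim(T)\neq m$. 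Having two maximal saturated chains of prime ideals of distinct lengths, $A$ is noncatenary.

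I expect this to be essentially bookkeeping. The only point warranting a little care is the last step, namely that the domain $A$ supplied by Corollary~\ref{corrolary one m} automatically carries a second maximal saturated chain, of length $\dim(T)$, distinct from the one of length $m$ built in. This rests on the equality $\dim(A)=\dim(T)$ together with the observation that in a Noetherian local domain a chain of prime ideals realizing the dimension runs from $(0)$ to the maximal ideal and can be taken saturated.
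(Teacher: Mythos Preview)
Your proof is correct and follows essentially the same approach as the paper: both directions are handled by combining Theorem~\ref{quasi-excellent characterization}, Theorem~\ref{Theorem 2.10}, and Corollary~\ref{corrolary one m}, and the noncatenarity of $A$ in the reverse direction is deduced from the existence of a maximal saturated chain of length $m<\dim(T)=\dim(A)$. The only cosmetic difference is that the paper cites Theorem~\ref{quasi-excellent characterization} for both conditions (i) and (ii) in the forward direction, whereas you obtain (i) from Theorem~\ref{Theorem 2.10}; either citation is fine.
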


\begin{proof}
 Suppose $T$ is the completion of a noncatenary quasi-excellent local domain. By Theorem \ref{quasi-excellent characterization}, no integer of $T$ is a zero divisor and $T$ is reduced. By Theorem \ref{Theorem 2.10} condition $(iii)$ holds.  Now suppose $T$ satisfies conditions $(i)$, $(ii)$, and  $(iii)$. By Corollary \ref{corrolary one m}, $T$ is the completion of a quasi-excellent local domain $A$ with a maximal saturated chain of prime ideals of length dim$(T/P) < \mbox{dim}(T) = \mbox{dim}(A)$. It follows that $A$ is not catenary.
\end{proof}

We now state another interesting consequence of Theorem \ref{quasi-excellent characterization}.  In particular, we give sufficient conditions for a complete local ring $T$ to be the completion of a quasi-excellent catenary local domain, but not the completion of a universally catenary local domain.  In other words, although $T$ is the completion of a domain that is ``almost'' excellent, it is not the completion of an excellent domain.

\begin{theorem}\label{cat not uni cat}
Let $(T,M)$ be a complete local ring of characteristic zero and dimension $n > 1$. Then $T$ is the completion of a quasi-excellent catenary local domain but is not the completion of a universally catenary local domain if the following conditions hold
 \begin{enumerate}[(i)]
     \item No integer of $T$ is a zero divisor, \label{cnu - i}
     \item $T$ is reduced, \label{cnu - ii}
     \item For every $P \in \Min (T)$, either $ \dim(T/P) = 1$ or $ \dim(T/P) = n $, and \label{cnu - iii}
     \item There exist $P_0, P_1 \in \Min (T)$ such that $\dim(T/P_0) = 1$ and $\dim(T/P_1) = n$. \label{cnu - iv}
 \end{enumerate}
\end{theorem}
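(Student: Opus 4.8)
The plan is to prove the two assertions of the statement separately. Suppose first, toward a contradiction, that $\widehat{A'}\cong T$ for some universally catenary local domain $A'$. Applying Theorem~\ref{universally cat equiv} to $A'$ with the prime ideal $(0)$ shows that $\widehat{A'/(0)}=\widehat{A'}\cong T$ is equidimensional. But by~\eqref{cnu - iv} there is a minimal prime $P_0$ of $T$ with $\dim(T/P_0)=1$, and $1\neq n=\dim(T)$ since $n>1$; so $T$ is not equidimensional, a contradiction. Hence $T$ is not the completion of a universally catenary local domain.

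It remains to construct a quasi-excellent \emph{catenary} local domain with completion $T$. By~\eqref{cnu - i},~\eqref{cnu - ii}, and $\dim(T)=n>1\geq 1$, Lemma~\ref{Lemma 8} provides a local domain $A$ with $\widehat{A}\cong T$ satisfying its four conclusions, and by Lemma~\ref{showingquasi} this $A$ is quasi-excellent. The remaining work is to show $A$ is catenary, and here hypothesis~\eqref{cnu - iii} is essential. Since $A$ is a domain, it suffices to prove that for every prime ideal $Q$ of $A$ all saturated chains of prime ideals from $(0)$ to $Q$ have the same length; for then, concatenating a saturated chain from $(0)$ to $P$ with one from $P$ to $Q$ shows that all saturated chains between any $P\subsetneq Q$ have the common length $\operatorname{ht}(Q)-\operatorname{ht}(P)$.

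So fix a nonzero prime $Q$ of $A$ and a saturated chain $(0)\subsetneq P_1\subsetneq\cdots\subsetneq P_m=Q$; the goal is to show that $m$ depends only on $Q$. Note $\operatorname{ht}_A(P_1)=1$. By Remark~\ref{bijection}, each $P_iT$ is a prime ideal of $T$, it is the unique prime of $T$ contracting to $P_i$, and $P_iT\cap A=P_i\neq(0)$, so no $P_iT$ is a minimal prime of $T$ (minimal primes of $T$ contract to $(0)$ by Lemma~\ref{Lemma 8}(iii)). Consequently $P_1T\subsetneq\cdots\subsetneq P_mT=QT$ is saturated in $T$: a prime $\mathfrak q$ with $P_iT\subsetneq\mathfrak q\subsetneq P_{i+1}T$ is non-minimal, hence of the form $\mathfrak q=(\mathfrak q\cap A)T$ by Remark~\ref{bijection}, and then $\mathfrak q\cap A$ lies strictly between $P_i$ and $P_{i+1}$, contradicting saturation in $A$. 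Also, any prime $\mathfrak q$ of $T$ strictly contained in $P_1T$ has $\mathfrak q\cap A\subsetneq P_1$ (otherwise $\mathfrak q$ would lie over $P_1$ and hence equal $P_1T$), so $\mathfrak q\cap A=(0)$ since $\operatorname{ht}_A(P_1)=1$, and therefore $\mathfrak q$ is a minimal prime of $T$; it follows that $\operatorname{ht}_T(P_1T)=1$. Now suppose $m\geq 2$ and choose a minimal prime $\mathfrak p$ of $T$ with $\mathfrak p\subsetneq P_1T$. Since $\operatorname{ht}_T(P_1T)=1$, the chain $\mathfrak p\subsetneq P_1T\subsetneq\cdots\subsetneq P_mT=QT$ is saturated in $T$ and has length $m$. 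As $T$ is a complete local ring it is catenary, so $T/\mathfrak p$ is a catenary local domain and $m=\operatorname{ht}_{T/\mathfrak p}(QT/\mathfrak p)=\dim(T/\mathfrak p)-\dim(T/QT)$. Because $m\geq 2$ we get $\dim(T/\mathfrak p)\geq 2$, so~\eqref{cnu - iii} forces $\dim(T/\mathfrak p)=n$ and hence $m=n-\dim(T/QT)$, which depends only on $Q$. On the other hand, a saturated chain from $(0)$ to $Q$ of length $1$ exists precisely when $\operatorname{ht}_A(Q)=1$, in which case it is the only such chain. In all cases this length is constant, so $A$ is catenary and the proof is complete.

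The main obstacle is the last paragraph: the lengths of maximal chains in $A$ are controlled only by transferring them to $T$, and hypothesis~\eqref{cnu - iii} is exactly what guarantees that the minimal prime of $T$ lying at the bottom of a chain of length at least two must have coheight $n$, which pins the length down.
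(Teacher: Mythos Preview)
Your proof is correct. Both halves of the argument are sound: the contradiction with Theorem~\ref{universally cat equiv} for the ``not universally catenary'' part matches the paper exactly, and your direct verification that $A$ is catenary via the bijection of Remark~\ref{bijection} goes through as written.

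The paper takes a considerably shorter route for the catenarity claim. After producing the quasi-excellent local domain $A$ from Theorem~\ref{quasi-excellent characterization}, it simply invokes Theorem~\ref{Theorem 2.10}: hypothesis~\eqref{cnu - iii} ensures that no minimal prime $P$ of $T$ satisfies $1<\dim(T/P)<\dim(T)$, so condition~(iii) of Theorem~\ref{Theorem 2.10} fails and therefore $T$ is not the completion of \emph{any} noncatenary local domain; in particular $A$ is catenary. This is a two-line appeal to a black-box classification result from \cite{Small2017}.

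Your approach, by contrast, proves catenarity of the specific $A$ from Lemma~\ref{Lemma 8} from scratch, lifting saturated chains in $A$ to saturated chains in $T$ via the bijection of Remark~\ref{bijection} and then using catenarity of the complete ring $T$ together with hypothesis~\eqref{cnu - iii} to pin down their lengths. This is longer, but it is self-contained (it does not depend on Theorem~\ref{Theorem 2.10}) and makes transparent exactly where the coheight-$1$-or-$n$ hypothesis enters. It also illustrates, in a concrete way, how the prime correspondence of Remark~\ref{bijection} transfers chain-length information between $A$ and $T$, which is the same mechanism driving Theorem~\ref{specified length chains}. Either argument is perfectly acceptable; the paper's is more efficient, yours is more explanatory.
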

\begin{proof}
    Suppose $T$ is a complete local ring of characteristic zero satisfying conditions \eqref{cnu - i}-\eqref{cnu - iv}. By Theorem \ref{quasi-excellent characterization}, there exists a quasi-excellent local domain $A$ such that $\hat{A} \cong T$. By Theorem \ref{Theorem 2.10}, $A$ is catenary. 
    Now suppose $T$ is the completion of a universally catenary local domain $S$. 
 Then, by Theorem \ref{universally cat equiv}, $\widehat{S} = T$ is equidimensional, contradicting that $T$ satisfies condition (iv).
\end{proof}

\section{Completions of countable quasi-excellent local domains}

In this section we prove analogous versions of the main results from Section 3 for countable quasi-excellent local domains. 
  We start with the analogous version of Theorem \ref{quasi-excellent characterization}.  That is, in Theorem \ref{4.1new}, we characterize completions of countable quasi-excellent local domains in the case where the complete local ring contains the rationals.   We note that, although the statement of Theorem \ref{4.1new} is not currently in the literature, the proof of the theorem is essentially found in the proof of Theorem 3.9 in \cite{Teresa}. For convenience, we state that theorem here.

   \begin{theorem}[\cite{Teresa}, Theorem 3.9]
 \label{Yu 3.9new}
     Let $(T,M)$ be a complete local ring containing the rationals. Then $T$ is the completion of a countable excellent domain if and only if the following conditions hold:
     \begin{enumerate}
         \item $T$ is equidimensional,
         \item $T$ is reduced, and
         \item $T/M$ is countable.
    \end{enumerate}
 \end{theorem}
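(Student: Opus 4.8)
The plan is to prove both implications; the forward one is short and essentially already contained in the proof of Theorem~\ref{quasi-excellent characterization}, while the reverse one is the substantial part and will follow the recursive subring construction of \cite{Teresa} (a refinement of the construction behind Lemma~\ref{Lemma 8}). For the forward direction, suppose $A$ is a countable excellent domain with $\widehat{A}\cong T$. Since $A$ is in particular quasi-excellent and a domain, the argument in the proof of Theorem~\ref{quasi-excellent characterization} shows $A$ and the formal fibers of $A\to T$ are reduced, so Corollary~\ref{Matsumura 32.9} applied to the faithfully flat extension $T$ of $A$ gives that $T$ is reduced. Since $A$ is excellent it is universally catenary, so Theorem~\ref{universally cat equiv} applied with $P=(0)$ gives that $\widehat{A}=T$ is equidimensional. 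And $T/M\cong A/(M\cap A)$ is a homomorphic image of the countable ring $A$, hence countable.

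For the reverse direction, assume $T$ is reduced, equidimensional, $T/M$ is countable, and $\mathbb{Q}\subseteq T$. If $\dim(T)=0$, reducedness forces $T$ to be a field, which equals $T/M$ and is therefore countable; a field is excellent and its own completion, so this case is immediate. So assume $\dim(T)\geq 1$. I would first record that, since $T/M$ is countable and each $M^i/M^{i+1}$ is a finite-dimensional $T/M$-vector space, every $T/M^n$ is countable. Then I would build, by recursion on $\omega$, an increasing chain of countable quasi-local subrings $\mathbb{Q}=R_0\subseteq R_1\subseteq\cdots$ of $(T,M)$, each with maximal ideal $R_i\cap M$, and set $A=\bigcup_i R_i$. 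Through the recursion I would maintain: (a) no nonzero element of $R_i$ lies in any prime of $\Min(T)$; (b) finite-stage analogues of conditions (ii) and (iii) of Lemma~\ref{Lemma 8}, so that in the limit $T\otimes_A k(P)\cong k(P)$ for every nonzero $P\in\Spec(A)$ and $Q\cap A=(0)$ holds for $Q\in\Spec(T)$ exactly when $Q\in\Min(T)$; and (c) the completion requirements, namely that the limit ring satisfies $A\to T/M^2$ surjective and $IT\cap A=I$ for every finitely generated ideal $I$ of $A$. Since each $R_i$ is countable there are only countably many finitely generated ideals and countably many cosets in each $T/M^n$ to handle, so the tasks in (c) form a countable list that can be interleaved with (a) and (b) by the usual bookkeeping. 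The crux of each step is adjoining to $R_i$ an element of $T$ realizing a prescribed coset (or witnessing $IT\cap R_i\subseteq I$) while avoiding the finitely many primes of $\Min(T)$ and the finitely many additional ``bad'' primes that would break (b); this is exactly where the avoidance lemmas of \cite{Teresa} are used, and they apply because all the relevant residue fields contain $\mathbb{Q}$ and are hence infinite, leaving room to pick the new element generically. At the end $A$ is countable, quasi-local with maximal ideal $M\cap A$, and by the subring completion criterion of \cite{Teresa} it is Noetherian with $\widehat{A}\cong T$; moreover $A$ is a domain, since if $xy=0$ in $A$ with $x\neq 0$ then (a) gives $x\notin\mathfrak{p}$ for every $\mathfrak{p}\in\Min(T)$, forcing $y\in\mathfrak{p}$ for all such $\mathfrak{p}$ and hence $y\in\bigcap_{\mathfrak{p}\in\Min(T)}\mathfrak{p}=(0)$ as $T$ is reduced.

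It then remains to verify excellence. By (b), for nonzero $P\in\Spec(A)$ and any finite field extension $L$ of $k(P)$ we get $T\otimes_A L\cong k(P)\otimes_{k(P)}L\cong L$, a field, while $T\otimes_A QF(A)\cong S^{-1}T$ with $S=A\setminus\{0\}$, which by (a) and (b) is a finite product of the fields $T_{\mathfrak{p}}$ with $\mathfrak{p}\in\Min(T)$, hence regular; since $\mathrm{char}(T)=0$ these fibers are geometrically regular, so, as in Lemma~\ref{showingquasi}, $A$ is quasi-excellent. For universal catenarity I would apply Theorem~\ref{universally cat equiv}(iii): $\widehat{A/P}=\widehat{A}/P\widehat{A}=T/PT$ equals $T$ when $P=(0)$, which is equidimensional by hypothesis, and for $P\neq(0)$ it is a domain since $PT$ is prime by (b), hence equidimensional. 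Thus $A$ is both universally catenary and quasi-excellent, i.e. excellent, with completion $T$.

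The hard part is the reverse direction's recursion in the countable regime: one must meet a countable list of completion requirements while simultaneously steering clear of every minimal prime of $T$ (to keep $A$ a domain and pin down the generic formal fiber) and controlling the non-generic formal fibers, all without ever leaving the countable world. This is precisely the technical content that must be imported from \cite{Teresa}, and the characteristic-zero hypothesis is what supplies the infinite residue fields making the required generic choices possible inside a countable subring.
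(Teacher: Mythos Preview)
There is a genuine obstruction in your reverse direction. You propose to carry condition~(ii) of Lemma~\ref{Lemma 8} to the limit, namely $T\otimes_A k(P)\cong k(P)$ for every nonzero prime $P$ of $A$. Combined with your condition on the generic fiber, this forces (exactly as in Remark~\ref{bijection}) a bijection between the non-minimal primes of $T$ and the nonzero primes of $A$. But a complete local ring $(T,M)$ with countable residue field and $\dim(T)\geq 2$ already has uncountably many prime ideals: in $\mathbb{Q}[[x,y]]$, for instance, the ideals $(y-\sum_{n\geq 1}a_nx^n)$ with $(a_n)\in\mathbb{Q}^{\mathbb{N}}$ are pairwise distinct height-one primes. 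Since a countable Noetherian ring has only countably many prime ideals (each being finitely generated), no countable $A$ can satisfy the Lemma~\ref{Lemma 8} conditions once $\dim(T)\geq 2$. This is precisely why, as the introduction notes, the construction behind Lemma~\ref{Lemma 8} produces an uncountable ring.

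The argument in \cite{Teresa} (which the present paper cites rather than reproves; see Remark~\ref{important} and Lemmata~\ref{Yu 3.6new} and~\ref{Yu 3.8new}) avoids this by \emph{not} asking the nonzero formal fibers to be single points. Instead of condition~(ii), one only ensures that each localized fiber $(T/PT)_Q$ is a regular local ring, which is all quasi-excellence requires. Concretely, at each stage and for each prime $P$ of the current countable subring one adjoins generators for the finitely many minimal primes of the ideal defining $\mathrm{Sing}(T/PT)$; then any $Q\in\Spec(T)$ with $Q\cap A=P$ and $(T/PT)_Q$ singular would have to satisfy $Q\cap A\supsetneq P$, a contradiction. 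This uses only countably many adjunctions and is compatible with keeping $A$ countable. Note also that your check of universal catenarity relies on $PT$ being prime for every nonzero $P$, which is again a Lemma~\ref{Lemma 8} consequence unavailable here; one must instead argue directly from equidimensionality of $T$ together with regularity (hence Cohen--Macaulayness) of the formal fibers that each $\widehat{A/P}=T/PT$ is equidimensional.
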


 \begin{rmk}\label{important}
In the proof of the above theorem in \cite{Teresa}, the authors show the following result.  Let $(T,M)$ be a reduced complete local ring containing the rationals such that dim$(T) \geq 1$ and $T/M$ is countable.  Let $(R_0,R_0 \cap M)$ be a countable local subring of $T$ such that $\widehat{R_0} = T$.  Then there exists a subring $(S, S \cap M)$ of $T$ such that $S$ is a countable quasi-excellent local domain, $R_0 \subseteq S$, and $\widehat{S} = T$.
 \end{rmk}

 In Theorem \ref{4.1new}, we show that the  quasi-excellent version of Theorem \ref{Yu 3.9new} is the same as Theorem \ref{Yu 3.9new} except that the ``$T$ is equidimensional'' condition is omitted.

\begin{theorem} \label{4.1new}
    Let $(T,M)$ be a complete local ring containing the rationals. Then $T$ is the completion of a countable quasi-excellent local domain if and only if
    \begin{enumerate}[(i)]
        \item $T$ is reduced, and
        \item $T/M$ is countable.
    \end{enumerate}
\end{theorem}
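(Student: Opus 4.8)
The proof will be an assembly of results already available: Theorem \ref{quasi-excellent characterization} for one direction, and Theorem \ref{Yu 2.2} together with Remark \ref{important} for the other. No new construction is needed.

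\emph{Necessity.} Suppose $A$ is a countable quasi-excellent local domain with $\widehat{A} \cong T$. Since $\mathbb{Q} \subseteq T$, the ring $T$ has characteristic zero, so Theorem \ref{quasi-excellent characterization} applies and shows that $T$ is reduced, giving condition (i). For condition (ii), recall that completion preserves the residue field, so $T/M \cong A/(M \cap A)$; the latter is a quotient of the countable ring $A$, hence countable. (The ``no integer is a zero divisor'' clause of Theorem \ref{Lech's Theorem} is automatic here, since every nonzero integer is a unit of $T$.)

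\emph{Sufficiency.} Assume $T$ is reduced and $T/M$ is countable. If $\dim(T) = 0$, then a reduced local ring of dimension zero is a field, so $T$ is a countable field; it is its own completion and is excellent, hence quasi-excellent, and we are done. Assume now $\dim(T) \geq 1$. The first step is to exhibit a countable local subring $R_0$ of $T$ with $\widehat{R_0} = T$, which I would do via Theorem \ref{Yu 2.2}: its three hypotheses hold, because no integer of $T$ is a zero divisor (as $\mathbb{Q} \subseteq T$), $M \notin \Ass(T)$ (since $T$ is reduced, $\Ass(T) = \Min(T)$, and $M$ minimal would force $\dim(T) = 0$), and $T/M$ is countable. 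So Theorem \ref{Yu 2.2} produces a countable local domain $R_0$ with $\widehat{R_0} \cong T$; since $R_0$ embeds into $\widehat{R_0}$ by the Krull intersection theorem, we identify $R_0$ with a countable local subring of $T$ whose maximal ideal is $R_0 \cap M$ and whose completion is $T$. The second step is simply to feed this $R_0$ into Remark \ref{important}: since $T$ is a reduced complete local ring containing the rationals with $\dim(T) \geq 1$ and $T/M$ countable, and $(R_0, R_0 \cap M)$ is a countable local subring with $\widehat{R_0} = T$, Remark \ref{important} yields a countable quasi-excellent local domain $(S, S \cap M)$ with $R_0 \subseteq S$ and $\widehat{S} = T$. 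Thus $T$ is the completion of the countable quasi-excellent local domain $S$.

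The proof contains no real obstacle, since the hard work is carried out inside the cited Remark \ref{important}; the points requiring care are verifying that the hypotheses ``no integer is a zero divisor'' and ``$M \notin \Ass(T)$'' of Theorem \ref{Yu 2.2} genuinely follow from ``$T$ reduced'' plus ``$\mathbb{Q} \subseteq T$'', and checking that the countable local ring delivered by Theorem \ref{Yu 2.2} may legitimately be viewed as a subring of $T$ so that Remark \ref{important} applies verbatim. If one preferred not to cite Theorem \ref{Yu 2.2}, one could instead build $R_0$ by hand, adjoining to $\mathbb{Q}$ a countable generating set for a coefficient field of $T/M$ together with a system of parameters and closing up appropriately; but the citation is cleaner.
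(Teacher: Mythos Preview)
Your proposal is correct and follows essentially the same approach as the paper's own proof: necessity via Theorem \ref{quasi-excellent characterization}, and sufficiency by splitting on $\dim(T)$, then invoking Theorem \ref{Yu 2.2} to produce $R_0$ and Remark \ref{important} to upgrade it to the desired $S$. The only cosmetic difference is that for the countability of $T/M$ in the necessity direction you argue directly via $T/M \cong A/(M\cap A)$, whereas the paper cites Theorem \ref{Yu 2.2}; your version is arguably cleaner.
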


\begin{proof}
If $T$ is the completion of a countable quasi-excellent domain, then, by Theorem \ref{quasi-excellent characterization}, $T$ is reduced and by Theorem \ref{Yu 2.2}, $T/M$ is countable.

Now suppose that $T$ is reduced and $T/M$ is countable.  If dim$(T) = 0$ then, since $T$ is reduced, $T$ is a field and $M = (0)$.  It follows that $T/M \cong T$ is countable, so $T$ is the completion of a countable quasi-excellent local domain, namely itself.  If dim$(T) \geq 1$, then, since $T$ contains the rationals, no integer of $T$ is a zero divisor.  Because $T$ is reduced and dim$(T) \geq 1$ we have that $M$ is not an associated prime ideal of $T$.  By Theorem \ref{Yu 2.2}, $T$ is the completion of a countable local domain $(R_0, R_0 \cap M)$. By Remark \ref{important}, we have that $T$ is the completion of a countable quasi-excellent local domain.
%
\end{proof}

For the remainder of this section, we focus on an analogous version of Theorem \ref{specified length chains} where we require $A$ to be countable.  We begin by recalling a definition and two important results from \cite{Teresa}.

\begin{definition}[\cite{Teresa}, Definition 3.2] Let $(T,M)$ be a complete local ring and let $(R_0, R_0 \cap M)$ be a countable local subring of $T$ such that $R_0$ is a domain and $\widehat{R_0}=T$. Let $(R, R \cap M)$ be a quasi-local subring of $T$ with $R_0 \subseteq R$. Suppose that
\begin{enumerate}[(i)]
    \item $R$ is countable, and
    \item $R \cap P = (0)$ for every $P \in \Ass(T)$.
\end{enumerate}
Then we call $R$ a built-from-$R_0$ subring of $T$, or a $BR_0$-subring of $T$ for short.
\end{definition}

\begin{lemma}[\cite{Teresa}, Lemma 3.6] \label{Yu 3.6new}
    Suppose $(T,M)$ is a complete local ring with $\dim (T)\geq 1$, $(R_0, R_0\cap M)$ is a countable local domain with $R_0\subseteq T$ and $\widehat{R_0} = T$, and $(R, R\cap M)$ is a $BR_0$-subring of $T$. Let $Q\in \Spec(T)$ such that $Q\nsubseteq P$ for all $P\in \Ass(T)$. Then there exists a $BR_0$-subring of $T$, $(R', R'\cap M)$ such that $R\subseteq R'$ and $R'$ contains a generating set for $Q$. 
\end{lemma}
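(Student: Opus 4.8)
The plan is to build $R'$ by adjoining to $R$, one at a time, carefully perturbed versions $q_1',\dots,q_n'$ of a fixed finite generating set $q_1,\dots,q_n$ of $Q$ (one exists since $T$ is Noetherian), arranged so that $(q_1',\dots,q_n')=Q$ while every intermediate ring stays countable and continues to meet each associated prime of $T$ only in $(0)$. Before starting I would record three preliminaries. First, $M\notin\Ass(T)$: since $\widehat{R_0}=T$ and $\dim T\geq 1$, the local domain $R_0$ is not a field, so $R_0\cap M\neq(0)$, whereas $R_0\cap P=(0)$ for every $P\in\Ass(T)$ (because $R_0\subseteq R$ and $R$ is a $BR_0$-subring), which rules out $M\in\Ass(T)$. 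Second, consequently $\dim(T/P)\geq 1$ for every $P\in\Ass(T)$, so each such $T/P$ is an uncountable complete local domain (a complete local Noetherian ring of positive dimension is uncountable). Third, $MQ\nsubseteq P$ for every $P\in\Ass(T)$: otherwise $M\subseteq P$ or $Q\subseteq P$, the first being excluded by $M\notin\Ass(T)$ and the second contradicting the hypothesis on $Q$.

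The heart of the argument is a single adjunction step: given a countable subring $D\subseteq T$ with $D\cap P=(0)$ for every $P\in\Ass(T)$ and given $q\in Q$, produce an element $q'\in q+MQ$ whose image in $T/P$ is transcendental over the image $\overline D$ of $D$, for every $P\in\Ass(T)$. Granting this, $D[q']$ is again countable and $D[q']\cap P=(0)$ for every $P\in\Ass(T)$: if some $f\in D[x]\setminus\{0\}$ had $f(q')\in P$, then reducing modulo $P$ and using transcendence of $\overline{q'}$ over $\overline D$ would force every coefficient of $f$ into $P\cap D=(0)$, a contradiction. To produce $q'$, enumerate $\Ass(T)=\{P_1,\dots,P_m\}$ so that $\dim(T/P_1)\leq\cdots\leq\dim(T/P_m)$; with this ordering $P_l\nsubseteq P_k$ whenever $l<k$, since a proper containment of primes strictly lowers the dimension of the quotient. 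Now adjust $q$ one prime at a time: put $q^{(0)}=q$, and given $q^{(k)}$ whose image is transcendental over $\overline D$ in each of $T/P_1,\dots,T/P_k$, set $q^{(k+1)}=q^{(k)}$ if the image of $q^{(k)}$ in $T/P_{k+1}$ is already transcendental over $\overline D$; otherwise observe that $MQ\cap\bigcap_{l\leq k}P_l\nsubseteq P_{k+1}$ (this intersection contains $MQ\cdot\bigcap_{l\leq k}P_l$, and neither factor lies in the prime $P_{k+1}$), pick $z$ in it with $z\notin P_{k+1}$, and set $q^{(k+1)}=q^{(k)}+az$ for a suitable $a\in T$. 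As $a$ ranges over $T$, the elements $a\bar z$ fill the principal ideal $\bar z\,(T/P_{k+1})$, which is uncountable since $\bar z\neq 0$ and $T/P_{k+1}$ is an uncountable domain, while only countably many elements of $T/P_{k+1}$ are algebraic over the countable ring $\overline D$; hence $a$ can be chosen with the image of $q^{(k+1)}$ transcendental over $\overline D$ in $T/P_{k+1}$, and since $z\in P_l$ for all $l\leq k$ the images in $T/P_1,\dots,T/P_k$ are unaffected. Then $q'=q^{(m)}$ works, and $q'-q\in MQ$.

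With this step in hand I would iterate. Put $R^{(0)}=R$; given $R^{(i-1)}$, apply the step with $D=R^{(i-1)}$ and $q=q_i$ to obtain $q_i'$, and set $R^{(i)}=R^{(i-1)}[q_i']$. Each $R^{(i)}$ is countable and meets every $P\in\Ass(T)$ only in $(0)$. An induction on $i$ shows $(q_1',\dots,q_i',q_{i+1},\dots,q_n)=Q$: assuming this for $i-1$, we have $q_i'-q_i\in MQ=M(q_1',\dots,q_{i-1}',q_i,\dots,q_n)$, hence $q_i'=(1+\mu)q_i+(\text{a }T\text{-combination of the remaining generators})$ with $\mu\in M$; inverting the unit $1+\mu$ recovers $q_i$ from $q_1',\dots,q_i',q_{i+1},\dots,q_n$, while $q_i'\in Q$ is clear. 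Finally take $R'=(R^{(n)})_{R^{(n)}\cap M}$, the localization at the prime $R^{(n)}\cap M$: this is a countable quasi-local subring of $T$ with maximal ideal $R'\cap M$, it contains $R\supseteq R_0$, it still meets each $P\in\Ass(T)$ only in $(0)$ (a unit of $T$ clears denominators), and it contains $\{q_1',\dots,q_n'\}$, a generating set for $Q$. So $R'$ is a $BR_0$-subring with the required properties.

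I expect the single adjunction step to be the main obstacle, because of its two competing demands on $q'$: it must remain in the coset $q+MQ$ so that the ideal $Q$ is ultimately preserved, yet its image must become transcendental modulo \emph{every} associated prime at once. The ordering of $\Ass(T)$ by $\dim(T/P)$, together with the fact that $MQ$ and the relevant intersections of associated primes are never contained in the next associated prime, is exactly what makes the prime-by-prime correction possible without undoing earlier progress; the cardinality bookkeeping (uncountability of each $T/P$ versus countability of $D$) is then routine.
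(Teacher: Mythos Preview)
The paper does not supply its own proof of this lemma; it is merely quoted from \cite{Teresa} (as Lemma 3.6 there) and used as a black box. So there is no in-paper argument to compare against.

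That said, your proof is correct and is in fact the standard approach for results of this kind (the technique goes back to Lech and Heitmann and is what \cite{Teresa} also uses). The three preliminaries are fine: the argument that $M\notin\Ass(T)$ via $R_0\cap M\neq(0)$ but $R_0\cap P=(0)$ is sound, and the uncountability of each $T/P$ follows from $\dim(T/P)\geq 1$ and completeness as you say. In the adjunction step, the ordering of $\Ass(T)$ by $\dim(T/P_i)$ really does give $P_l\nsubseteq P_k$ for $l<k$, since a strict containment of primes in a Noetherian local ring strictly drops the coheight; this handles even embedded associated primes, so you do not need $T$ reduced. The prime-avoidance claim $MQ\cap\bigcap_{l\le k}P_l\nsubseteq P_{k+1}$ is justified exactly as you wrote, via the product $MQ\cdot\bigcap_{l\le k}P_l$. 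The cardinality count for transcendence works because $T/P_{k+1}$ is a domain, so a nonzero polynomial over the countable ring $\overline D$ has only finitely many roots there. The Nakayama-type swap $(q_1',\dots,q_i',q_{i+1},\dots,q_n)=Q$ is correct, and the final localization at $R^{(n)}\cap M$ legitimately lands inside $T$ (elements outside $M$ are units of $T$) with maximal ideal $R'\cap M$ and with $R'\cap P=(0)$ preserved. Nothing is missing.
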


\begin{lemma}[\cite{Teresa}, Lemma 3.8] \label{Yu 3.8new}
    Suppose $(T,M)$ is a complete local ring with $\dim (T)\geq 1$ and $(R_0, R_0\cap M)$ is a countable local domain with $R_0\subseteq T$ and $\widehat{R_0} = T$. Let $(R, R\cap M)$ be a $BR_0$-subring of $T$. Then there exists a $BR_0$-subring of $T$, $(R', R'\cap M)$, such that $R\subseteq R' \subseteq T$, and, if $I$ is a finitely generated ideal of $R'$, then $IT \cap R' = IR'$. Thus, $R'$ is Noetherian and $\widehat{R'} = T$.
\end{lemma}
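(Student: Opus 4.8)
The plan is to build $R'$ as the union of a countable ascending chain of $BR_0$-subrings $R = S_0 \subseteq S_1 \subseteq S_2 \subseteq \cdots$ of $T$, where at each stage the ring is enlarged so as to force one instance of the ``saturation'' condition. Everything rests on a one-step lemma: if $S$ is a $BR_0$-subring of $T$, if $a_1,\dots,a_k\in S$, and if $c\in(a_1,\dots,a_k)T\cap S$, then there is a $BR_0$-subring $S^{\ast}$ of $T$ with $S\subseteq S^{\ast}$ and $c\in(a_1,\dots,a_k)S^{\ast}$. Granting this, one enlarges $R$ in countably many steps by a standard diagonalization: each $S_n$ is countable, so there are only countably many pairs $\big((a_1,\dots,a_k),\,c\big)$ with $a_i\in S_n$ and $c\in(a_1,\dots,a_k)T\cap S_n$, and fixing a bijection $\mathbb{N}\cong\mathbb{N}\times\mathbb{N}$ whose first coordinate is bounded by the index lets us arrange that every such pair (for every $n$) is eventually handled by one application of the one-step lemma. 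Put $R'=\bigcup_n S_n$. Then $R\subseteq R'\subseteq T$; $R'$ is countable; $R'$ is quasi-local with maximal ideal $R'\cap M$ (any element outside $R'\cap M$ lies outside $S_n\cap M$ for some $n$, hence is a unit in $S_n$, hence in $R'$); and $R'\cap P=\bigcup_n(S_n\cap P)=(0)$ for all $P\in\Ass(T)$ — so $R'$ is a $BR_0$-subring. Finally, if $I=(a_1,\dots,a_k)R'$ is a finitely generated ideal of $R'$ and $c\in IT\cap R'$, then $a_1,\dots,a_k,c$ all lie in some $S_n$, so the pair $\big((a_1,\dots,a_k),c\big)$ was handled and $c\in(a_1,\dots,a_k)S_m\subseteq IR'$ for some $m$; thus $IT\cap R'=IR'$.

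To prove the one-step lemma, first discard the $a_i$ that vanish (they do not change the ideal $(a_1,\dots,a_k)$); if all vanish then $c\in(0)$ and $S^{\ast}=S$ works, so assume $a_i\neq0$ for all $i$. Because $a_i\in S$ and $S\cap P=(0)$ for every $P\in\Ass(T)$, each $a_i$ avoids all associated primes of $T$ and hence is a nonzerodivisor in $T$. Write $c=a_1t_1+\cdots+a_kt_k$ with $t_i\in T$. Using the Koszul syzygies of $a_1,\dots,a_k$, replace $t_j$ by $s_j:=t_j-r_ja_1$ for $j=2,\dots,k$ (so each $s_j$ ranges over the coset $t_j+a_1T$) and set $s_1:=\big(c-\sum_{j\geq2}a_js_j\big)/a_1$, which lies in $T$ since $c-\sum_{j\geq2}a_js_j=a_1\big(t_1+\sum_{j\geq2}r_ja_j\big)$; then $\sum_i a_is_i=c$ for any choice of the $r_j$. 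I would use this freedom to make the images $\overline{s_2},\dots,\overline{s_k}$ algebraically independent over the fraction field of the image of $S$ in $T/P$, for every $P\in\Ass(T)$ at once: $\Ass(T)$ is finite, and each $T/P$ is a complete local domain of dimension $\geq1$ (using $\dim(T)\geq1$ together with $M\notin\Ass(T)$, the latter holding by Theorem \ref{Lech's Theorem} since $\widehat{R_0}=T$ with $R_0$ a domain), hence uncountable, while $S$ is countable; so choosing $s_2,\dots,s_k$ one at a time while avoiding, in each $T/P$, the countably many residues algebraic over the subfield already determined is a routine cardinality/prime-avoidance argument — one processes the primes of $\Ass(T)$ in an order refining reverse inclusion, so that at each step the intersection of the earlier primes still has nonzero image modulo the current prime. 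With such a choice $S[s_2,\dots,s_k]\cap P=(0)$ for all $P\in\Ass(T)$; and then, given any $f(s_1,\dots,s_k)\in P$ with $f$ a polynomial over $S$, multiplying by a large power of $a_1$ and using $a_1s_1=c-\sum_{j\geq2}a_js_j\in S[s_2,\dots,s_k]$ turns it into an element of $S[s_2,\dots,s_k]\cap P=(0)$, forcing $f(s_1,\dots,s_k)=0$ since $a_1$ is a nonzerodivisor; hence $S[s_1,\dots,s_k]\cap P=(0)$. Localizing $S[s_1,\dots,s_k]$ at the prime $S[s_1,\dots,s_k]\cap M$ (an operation that inverts only units of $T$) yields the desired $BR_0$-subring $S^{\ast}\supseteq S$ with $c=\sum_i a_is_i\in(a_1,\dots,a_k)S^{\ast}$.

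For the final assertion that $R'$ is Noetherian with $\widehat{R'}=T$: since $R_0\subseteq R'$ and $\widehat{R_0}=T$, the composite $R_0\to T\to T/M^2$ is onto, so $R'+M^2=T$. Lift a basis of the $T/M$-vector space $M/M^2$ to elements $x_1,\dots,x_n\in R'\cap M$; Nakayama gives $(x_1,\dots,x_n)T=M$, so the saturation property yields $R'\cap M=(x_1,\dots,x_n)T\cap R'=(x_1,\dots,x_n)R'$, which is finitely generated. Running the same argument with an arbitrary ideal $J$ of $R'$ — writing $JT=(b_1,\dots,b_m)T$ with $b_i\in J$, possible as $T$ is Noetherian — gives $J=(b_1,\dots,b_m)R'$, so $R'$ is Noetherian. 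Finally $R'+M^k=T$ for all $k$ (induct from $R'+M^2=T$ using $M=\mathfrak m+M^2$) and $R'\cap M^k=\mathfrak m^k$ for $\mathfrak m:=R'\cap M$ (again by the saturation property, applied to the finitely generated ideal $\mathfrak m^k$), so $R'/\mathfrak m^k\cong T/M^k$ for all $k$ and therefore $\widehat{R'}\cong T$.

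I expect the one-step lemma to be the crux — and within it, the genericity step ensuring that the enlarged ring still meets each associated prime of $T$ only in $(0)$; by contrast, the diagonalization bookkeeping and the final Noetherian/completion deduction are routine once that lemma is in hand.
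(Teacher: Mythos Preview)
The paper does not prove this lemma; it is quoted from \cite{Teresa} (Lemma~3.8 there) and used as a black box in Section~4, so there is no proof in the present paper to compare against.

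That said, your outline is essentially the standard argument used in \cite{Teresa} and its predecessors: build $R'$ as a countable union of $BR_0$-subrings by repeatedly adjoining, for a given $c\in(a_1,\dots,a_k)T\cap S$, generically chosen coefficients $s_1,\dots,s_k$ with $c=\sum a_is_i$, the genericity ensuring that $S[s_1,\dots,s_k]\cap P=(0)$ for every $P\in\Ass(T)$; then diagonalize over all such closure obligations and deduce Noetherianness and $\widehat{R'}=T$ from $IT\cap R'=IR'$ together with $R_0\subseteq R'$. Your treatment of the one-step lemma is correct --- the reduction from $S[s_1,\dots,s_k]\cap P=(0)$ to $S[s_2,\dots,s_k]\cap P=(0)$ by clearing powers of the nonzerodivisor $a_1$ is exactly right, and the step you label ``routine cardinality/prime-avoidance'' is the standard coset-avoidance lemma in this literature (a full coset of an ideal not contained in any member of a finite set of nonmaximal primes cannot be covered by countably many cosets of those primes; this is precisely where $\dim T\geq 1$ enters, to make each $T/P$ uncountable). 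The final paragraph is also the standard deduction, appearing for instance as a separate proposition in \cite{Teresa}.
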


Lemma \ref{Yu 3.6new} shows that, given a $BR_0$-subring of $T$, one can find a larger $BR_0$-subring of $T$ that contains a generating set for one particular prime ideal $Q$ of $T$. To prove the analogous version of Theorem \ref{specified length chains} for countable quasi-excellent domains, we show that the same can be done for a countable collection $Q_1, Q_2, \ldots$ of prime ideals of $T$. In fact, for our proof of Theorem \ref{specified length chains countable}, we only need to do this for a finite collection of prime ideals of $T$, but it is not difficult to prove it for countably infinitely many, and so we state and prove the result for a countable collection of prime ideals of $T$. We prove the result using repeated applications of Lemma \ref{Yu 3.6new}.

\begin{lemma}\label{countableQs}
    Suppose $(T,M)$ is a complete local ring with $\dim (T)\geq 1$, $(R_0, R_0\cap M)$ is a countable local domain with $R_0\subseteq T$ and $\widehat{R_0} = T$, and $(R, R\cap M)$ is a $BR_0$-subring of $T$. Let $Q_1, Q_2, \ldots$ be a countable set of prime ideals of $T$ such that, for every $i = 1,2, \ldots$, $Q_i\nsubseteq P$ for all $P\in \Ass(T)$. Then there exists a $BR_0$-subring of $T$, $(R', R'\cap M)$ such that $R\subseteq R'$ and, for every $i = 1,2, \ldots, $ $R'$ contains a generating set for $Q_i$. 
\end{lemma}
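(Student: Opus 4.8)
The plan is to build $R'$ as the union of an increasing chain of $BR_0$-subrings of $T$, using \Cref{Yu 3.6new} to absorb the prime ideals $Q_1, Q_2, \ldots$ one at a time. First I would set $R'_0 = R$ and then, recursively, given a $BR_0$-subring $R'_{i-1}$ of $T$, apply \Cref{Yu 3.6new} with the prime ideal $Q_i$ (which satisfies the hypothesis $Q_i \nsubseteq P$ for all $P \in \Ass(T)$ by assumption) to obtain a $BR_0$-subring $R'_i$ of $T$ with $R'_{i-1} \subseteq R'_i$ such that $R'_i$ contains a generating set for $Q_i$. This produces an ascending chain $R = R'_0 \subseteq R'_1 \subseteq R'_2 \subseteq \cdots$ of $BR_0$-subrings of $T$. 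I would then define $R' = \bigcup_{i \geq 0} R'_i$.

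The remaining work is to verify that $R'$ is itself a $BR_0$-subring of $T$ and that it contains a generating set for every $Q_i$. The latter is immediate: for each fixed $i$, the ring $R'_i \subseteq R'$ already contains a generating set for $Q_i$, so $R'$ does too. For the former, I would check the defining conditions of a $BR_0$-subring directly. Countability of $R'$ follows because it is a countable union of countable sets (each $R'_i$ is countable, being a $BR_0$-subring). The condition $R' \cap P = (0)$ for every $P \in \Ass(T)$ follows since if $x \in R' \cap P$, then $x \in R'_i$ for some $i$, and $R'_i \cap P = (0)$ because $R'_i$ is a $BR_0$-subring, so $x = 0$. Finally, $R'$ is a quasi-local subring of $T$ containing $R_0$ with maximal ideal $R' \cap M$: it contains $R_0 \subseteq R = R'_0$, it is a subring of $T$ as a union of a chain of subrings, and $R' \cap M$ is a maximal ideal of $R'$ since $R'/(R' \cap M)$ embeds in the field $T/M$ and every element of $R' \setminus (R' \cap M)$ is a unit in $T$ whose inverse lies in some $R'_j$ (as each $R'_j$ is quasi-local with maximal ideal $R'_j \cap M$, a unit of $T$ lying in $R'_j$ has its inverse in $R'_j$).

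There is no serious obstacle here; the lemma is a routine chain-union argument layered on top of \Cref{Yu 3.6new}. The one point that deserves a sentence of care is confirming that the class of $BR_0$-subrings is closed under unions of ascending chains — in particular that the quasi-local property passes to the union — but this is exactly the verification just sketched and presents no real difficulty. I would present the proof in two short paragraphs: one constructing the chain via \Cref{Yu 3.6new}, and one checking that $R' = \bigcup_i R'_i$ inherits the $BR_0$-subring properties and contains generating sets for all the $Q_i$.
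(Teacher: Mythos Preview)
Your proposal is correct and matches the paper's proof essentially line for line: build an ascending chain of $BR_0$-subrings by repeatedly applying \Cref{Yu 3.6new}, take the union, and verify directly that the union is again a $BR_0$-subring containing the required generating sets. You actually supply slightly more detail than the paper does on why the union remains quasi-local with maximal ideal $R'\cap M$; the paper simply asserts that countability and $R'\cap P=(0)$ suffice to conclude $R'$ is a $BR_0$-subring.
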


\begin{proof}
We inductively define an ascending chain $R \subseteq R_1 \subseteq R_2 \subseteq \cdots$ of $BR_0$-subrings of $T$ such that, for all $i = 1,2, \ldots$, $R_i$ contains a generating set for $Q_i$. Let $(R_1,R_1, \cap M)$ be the $BR_0$-subring obtained from Lemma \ref{Yu 3.6new} so that $R \subseteq R_1$ and $R_1$ contains a generating set for $Q_1$.  Let $(R_2,R_2, \cap M)$ be the $BR_0$-subring obtained from Lemma \ref{Yu 3.6new} so that $R_1 \subseteq R_2$ and $R_2$ contains a generating set for $Q_2$.  Continue the process to define $R_i$ for every $i = 1,2, \dots$.  
Define $R' = \bigcup_{i = 1}^{\infty}R_i$.  Then $R'$ is countable and, for every $i$, if $P \in \Ass(T)$, then $R_i \cap P = (0)$.  It follows that $R' \cap P = (0)$ for every $P \in \Ass(T)$.  Hence, $R'$ is a $BR_0$-subring of $T$.  By construction, $R'$ contains a generating set for every $Q_i$.
\end{proof}

We now have all of the tools needed to prove the analogous version of Theorem \ref{specified length chains} for countable quasi-excellent domains.

\begin{theorem}\label{specified length chains countable}
 Let $(T,M)$ be a complete local ring containing the rationals and let $m_i$ for $i = 1,2, \ldots ,n$ be integers such that $1 < m_1 < \dots < m_n < \dim (T)$. Then $T$ is the completion of a countable quasi-excellent local domain with maximal saturated chains of prime ideals of lengths $m_1,\ldots,m_n$ if and only if
 \begin{enumerate}[(i)]
     \item $T$ is reduced, 
     \item There exists prime ideals $P_1,\ldots,P_n$ in $\Min (T)$ such that $\dim (T/P_i) = m_i$ for $1 \leq i \leq n$, and 
     \item $T/M$ is countable.
 \end{enumerate}
\end{theorem}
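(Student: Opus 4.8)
The plan is to obtain the forward implication immediately from earlier results and to prove the converse by an explicit construction built on the $BR_0$-subring machinery of \cite{Teresa}. For ($\Rightarrow$): if $A$ is a countable quasi-excellent local domain with $\widehat A\cong T$ possessing maximal saturated chains of lengths $m_1,\dots,m_n$, then Theorem \ref{4.1new} gives that $T$ is reduced and $T/M$ is countable, while Lemma \ref{Lemma 2.9} applied to each of these chains produces $P_i\in\Min(T)$ with $\dim(T/P_i)=m_i$. This is exactly conditions (i)--(iii).

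For ($\Leftarrow$), assume (i)--(iii). Since $1<m_1<\dim(T)$ we have $\dim(T)\geq 1$, so no integer of $T$ is a zero divisor (as $\Q\subseteq T$) and $M\notin\Ass(T)$ (as $T$ is reduced of positive dimension). For each $i$, Lemma \ref{Lemma 2.8} applied to $P_i$ yields a saturated chain $P_i\subsetneq Q_{i,1}\subsetneq\cdots\subsetneq Q_{i,m_i-1}\subsetneq M$ of $T$ in which each $Q_{i,j}$ is not in $\Ass(T)$ and $P_i$ is the only minimal prime of $T$ contained in $Q_{i,j}$; in particular $Q_{i,j}\nsubseteq P$ for all $P\in\Ass(T)$, since such a containment would force $P_i\subsetneq P$. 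By Theorem \ref{Yu 2.2} there is a countable local domain $(R_0,R_0\cap M)$ with $\widehat{R_0}=T$; since $R_0\hookrightarrow T$ is faithfully flat, the going-down property forces $R_0\cap P=(0)$ for every $P\in\Min(T)=\Ass(T)$, so $R_0$ is a $BR_0$-subring of $T$. Then apply, in turn, Lemma \ref{countableQs} to the finite family $\{Q_{i,j}\}$ to get a $BR_0$-subring $R_1\supseteq R_0$ containing a generating set for each $Q_{i,j}$, Lemma \ref{Yu 3.8new} to get a Noetherian $BR_0$-subring $R_2\supseteq R_1$ with $\widehat{R_2}=T$, and Remark \ref{important} with $R_2$ in place of $R_0$ to produce a countable quasi-excellent local domain $S$ with $R_2\subseteq S\subseteq T$ and $\widehat S=T$. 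By construction $S$ contains a generating set for each $Q_{i,j}$, and going-down for $S\hookrightarrow T$ gives $S\cap P=(0)$ for all $P\in\Min(T)$.

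To finish, fix $i$ and set $q_{i,j}=Q_{i,j}\cap S$ for $1\leq j\leq m_i-1$, $q_{i,0}=P_i\cap S=(0)$, and $q_{i,m_i}=M\cap S$. Because $S$ contains a generating set for $Q_{i,j}$ we have $q_{i,j}T=Q_{i,j}$, and since $(M\cap S)T=M$ as well, the chain $(0)=q_{i,0}\subsetneq q_{i,1}\subsetneq\cdots\subsetneq q_{i,m_i}=M\cap S$ is strictly increasing (equality of two consecutive contractions would give equality of the corresponding members of the chain of $T$). It is moreover \emph{saturated}: if some $W\in\Spec(S)$ satisfied $q_{i,j}\subsetneq W\subsetneq q_{i,j+1}$, then going-down for the faithfully flat extension $S\hookrightarrow T$, applied to $q_{i,j+1}\supseteq W$ and to $Q_{i,j+1}$ (or $M$ when $j+1=m_i$), which lies over $q_{i,j+1}$, would yield a prime $Q^{*}$ of $T$ with $Q^{*}\cap S=W$ and $Q^{*}$ contained in $Q_{i,j+1}$ (resp.\ $M$); one then checks $\widetilde Q_{i,j}\subseteq Q^{*}$, where $\widetilde Q_{i,j}$ is the $j$-th member of the chain $P_i\subsetneq Q_{i,1}\subsetneq\cdots\subsetneq M$ of $T$ --- for $j\geq 1$ because $Q_{i,j}=q_{i,j}T\subseteq WT\subseteq Q^{*}$, and for $j=0$ because $Q^{*}\subseteq Q_{i,1}$ contains a minimal prime of $T$, necessarily $P_i$. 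Thus $Q^{*}$ lies between two consecutive members of a saturated chain of $T$ and hence equals one of them, forcing $W\in\{q_{i,j},q_{i,j+1}\}$, a contradiction. Therefore $S$ is a countable quasi-excellent local domain with $\widehat S=T$ having, for each $i$, a maximal saturated chain of prime ideals of length $m_i$.

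The forward direction and the routine $BR_0$-subring bookkeeping aside, the proof has two load-bearing points: threading the lemmas of \cite{Teresa} so that the single ring $S$ is simultaneously a countable quasi-excellent local domain with completion $T$ and contains generators of the strategically chosen primes; and the descent in the previous paragraph showing that contracting the chains of $T$ to $S$ preserves both strictness and saturatedness. I expect this second point --- and within it the bottom rung, where a minimal prime of $T$ appears and one must use that $P_i$ is the only minimal prime inside $Q_{i,1}$ --- to be the main obstacle; it is resolved by going-down for the faithfully flat extension $S\hookrightarrow\widehat S=T$ combined with the extra structure of the chains supplied by Lemma \ref{Lemma 2.8}.
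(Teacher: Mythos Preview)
Your proof is correct and follows essentially the same route as the paper: the forward direction is cited from earlier results, and the converse threads Lemma~\ref{Lemma 2.8}, Theorem~\ref{Yu 2.2}, Lemma~\ref{countableQs}, Lemma~\ref{Yu 3.8new}, and Remark~\ref{important} in the same order to build $S$. The only stylistic difference is in the verification that the contracted chains are saturated: the paper handles the top rung via a dimension argument ($\dim(T/Q_{i,m_i-1})=1$) and the remainder by invoking catenarity of $T$ globally, whereas you argue each step uniformly by sandwiching a going-down lift $Q^{*}$ between consecutive members of the original saturated chain in $T$---both arguments rest on the same ingredients (faithful flatness of $S\hookrightarrow T$ and the extra structure of the chains from Lemma~\ref{Lemma 2.8}).
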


\begin{proof}
Suppose $T$ is the completion of a countable quasi-excellent domain with maximal saturated chains of prime ideals of lengths $m_1,\ldots,m_n$. By Theorem \ref{specified length chains}, $T$ satisfies the first two conditions of the theorem and by Theorem \ref{Yu 2.2}, $T/M$ is countable.

Now suppose $T$ satisfies conditions $(i), (ii)$ and $(iii)$.  Note that, by hypothesis, dim$(T) \geq 1$. Since $T$ is reduced and dim$(T) \geq 1$, $M$ is not an associated prime ideal of $T$. Now use Lemma \ref{Lemma 2.8} to find, for every $i = 1,2, \ldots ,n$, a saturated chain of prime ideals of $T$
$$P_i \subsetneq Q_{i,1} \subsetneq \cdots \subsetneq Q_{i, m_i - 1} \subsetneq M$$
where, for $j = 1,2, \ldots, m_i - 1$, $Q_{i,j}$ is not in $\Ass(T)$ and $P_i$ is the only minimal prime ideal contained in $Q_{i,j}$.

Note that, since $T$ contains the rationals, no integer of $T$ is a zero divisor.  By Theorem \ref{Yu 2.2}, $T$ is the completion of a countable local domain $(R_0,R_0 \cap M)$.  Since $T$ is reduced, the set of associated prime ideals of $T$ is the same as the set of minimal prime ideals of $T$.  Hence, we have that $Q_{i,j} \not\subseteq P$ for all $P \in \Ass(T)$. Now use Lemma \ref{countableQs} to find a $BR_0$-subring of $T$, $(R',R' \cap M)$ such that $R_0 \subseteq R'$ and $R'$ contains a generating set for all $Q_{i,j}$ where $i = 1,2, \ldots,n$ and $j = 1,2, \ldots, m_i$. Now use Lemma \ref{Yu 3.8new} to obtain a $BR_0$-subring of $T$, $(R'',R''\cap M)$ such that $R' \subseteq R''$, $R''$ is Noetherian, and $\widehat{R''} = T$.  Finally, use Remark \ref{important} to find a subring $(S,S \cap M)$ of $T$ such that $S$ is a countable quasi-excellent local domain, $R'' \subseteq S$, and $\widehat{S} = T$.  We claim that $S$ has maximal saturated chains of prime ideals of lengths $m_1, \ldots ,m_n$.

Note that, since $R' \subseteq S$, $S$ contains a generating set for all $Q_{i,j}$ where $i = 1,2, \ldots,n$ and $j = 1,2, \ldots ,m_i$.  Fix $i$ and intersect the chain $P_i \subsetneq Q_{i,1} \subsetneq \cdots \subsetneq Q_{i, m_i - 1} \subsetneq M$ with $S$ to obtain the chain of prime ideals
$$(0) = P_i \cap S\subsetneq Q_{i,1} \cap S \subsetneq \cdots \subsetneq Q_{i, m_i - 1} \cap S \subsetneq M \cap S$$
of $S$.  Note that the containments in this chain are strict since $S$ contains a generating set for each $Q_{i,j}$.  We claim this chain is saturated.  To see this, first observe that $(Q_{i,m_i - 1} \cap S)T = Q_{i,m_i - 1}$. 
 Hence, the completion of $S/(Q_{i,m_i - 1} \cap S)$ is $T/Q_{i,m_i - 1}$. As dim$(T/Q_{i,m_i - 1}) = 1$, we have that dim$(S/(Q_{i,m_i - 1} \cap S)) = 1$ and so the chain $Q_{i, m_i - 1} \cap S \subsetneq M \cap S$ is saturated.  Suppose the chain
 $$(0) = P_i \cap S\subsetneq Q_{i,1} \cap S \subsetneq \cdots \subsetneq Q_{i, m_i - 1} \cap S$$
 is not saturated.  Then, by the Going Down Property, there is a chain of prime ideals of $T$, $J_0 \subsetneq J_1 \subsetneq \cdots \subsetneq Q_{i,m_i - 1}$ of length greater than $m_i - 1$ where $J_0$ is a minimal prime ideal of $T$. Since $P_i$ is the only minimal prime ideal contained in $Q_{i,m_i - 1}$, we have that $J_0 = P_i$.  Therefore, there exist two saturated chains of prime ideals from $P_i$ to $Q_{i,m_i - 1}$ of different lengths.  Since $T$ is a complete local ring, it is excellent, and hence catenary, and so this is impossible.  It follows that the chain of prime ideals
 $$(0) = P_i \cap S\subsetneq Q_{i,1} \cap S \subsetneq \cdots \subsetneq Q_{i, m_i - 1} \cap S \subsetneq M \cap S$$
 of $S$ is saturated.  Hence, for every $i$, $S$ contains a maximal saturated chain of prime ideals of length $m_i$.
\end{proof}

We end this section by classifying completions of countable noncatenary quasi-excellent domains in the case where the complete local ring contains the rationals.

\begin{corollary} \label{countablenoncatcompletions}
Let $(T,M)$ be a complete local ring containing the rationals. Then $T$ is the completion of a countable noncatenary quasi-excellent local domain if and only if
 \begin{enumerate}[(i)]
     \item $T$ is reduced, 
     \item There exists a prime ideal $P$ in $\Min(T)$ such that $1 < \dim (T/P) < \dim (T)$, and
     \item $T/M$ is countable.
 \end{enumerate}
\end{corollary}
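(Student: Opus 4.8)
The plan is to deduce this corollary from Theorem~\ref{specified length chains countable} in essentially the same way that Corollary~\ref{noncatcompletions} was deduced from Theorem~\ref{specified length chains} (via Corollary~\ref{corrolary one m}). First I would handle the forward direction: suppose $T$ is the completion of a countable noncatenary quasi-excellent local domain $A$. By Theorem~\ref{4.1new}, $T$ is reduced and $T/M$ is countable, giving conditions (i) and (iii). For condition (ii), since $A$ is a noncatenary local domain with $\widehat{A}\cong T$, Theorem~\ref{Theorem 2.10} applies and yields a prime $P\in\Min(T)$ with $1<\dim(T/P)<\dim(T)$. (Note that the characteristic-zero hypothesis of Theorem~\ref{quasi-excellent characterization} is not needed here since $T$ contains the rationals, which directly rules out integer zero divisors.)

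For the converse, suppose $T$ satisfies (i), (ii), and (iii). By (ii), fix $P\in\Min(T)$ with $1<\dim(T/P)=:m<\dim(T)$. The goal is to apply Theorem~\ref{specified length chains countable} with $n=1$ and $m_1=m$. The hypotheses of that theorem are: $1<m_1<\dim(T)$, which holds by choice of $m$; $T$ reduced, which is (i); the existence of $P_1\in\Min(T)$ with $\dim(T/P_1)=m_1$, which is exactly our $P$; and $T/M$ countable, which is (iii). Theorem~\ref{specified length chains countable} then produces a countable quasi-excellent local domain $A$ with $\widehat{A}\cong T$ and a maximal saturated chain of prime ideals of length $m$. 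Since $\dim(A)=\dim(T)>m$, the domain $A$ also has a maximal saturated chain of length $\dim(T)\ne m$, so $A$ is noncatenary. This completes the argument.

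The only mild subtlety is whether Theorem~\ref{specified length chains countable} is literally applicable when $n=1$; its statement requires $1<m_1<\dots<m_n<\dim(T)$, and for $n=1$ this degenerates to $1<m_1<\dim(T)$, which is satisfied, so there is no obstacle there. (One could alternatively state and invoke an $n=1$ special case of Theorem~\ref{specified length chains countable}, paralleling Corollary~\ref{corrolary one m}, but since we need $m<\dim(T)$ strictly for noncatenarity anyway, the edge case $m=\dim(T)$ never arises and no such separate corollary is required.) I expect no genuine obstacle in this proof — it is a routine packaging of Theorem~\ref{specified length chains countable} together with Theorems~\ref{4.1new} and~\ref{Theorem 2.10}, entirely analogous to the proof of Corollary~\ref{noncatcompletions}.
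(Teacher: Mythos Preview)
Your proposal is correct and follows essentially the same approach as the paper: for the forward direction the paper cites Corollary~\ref{noncatcompletions} (for conditions (i) and (ii)) and Theorem~\ref{Yu 2.2} (for (iii)) rather than Theorem~\ref{4.1new} and Theorem~\ref{Theorem 2.10}, but this is only a cosmetic repackaging of the same underlying results; the converse is identical, applying Theorem~\ref{specified length chains countable} with $n=1$ and $m_1=\dim(T/P)$ to produce a countable quasi-excellent local domain with a maximal saturated chain of length strictly less than $\dim(A)=\dim(T)$.
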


\begin{proof}
Suppose that $T$ is the completion of a countable noncatenary quasi-excellent local domain.  By Corollary \ref{noncatcompletions}, $T$ satisfies conditions $(i)$, and $(ii)$. By Theorem \ref{Yu 2.2}, $T/M$ is countable.  Conversely, suppose $T$ satifies conditions $(i)$, $(ii)$, and $(iii)$. By Theorem \ref{specified length chains countable}, $T$ is the completion of a countable quasi-excellent local domain $A$ with a maximal saturated chain of prime ideals of length dim$(T/P) < \mbox{dim}(T) = \mbox{dim}(A)$. As a consequence, $A$ is not catenary.
\end{proof}


\section{Examples of noncatenary quasi-excellent domains}


The results in this paper can be used to construct interesting examples of noncatenary quasi-excellent domains.  For example, suppose $n$ and $m$ are integers with $1 < n < m$, and let $T = \mathbb{Q}[[x_1,\ldots,x_{m + 1}]]/(P \cap Q)$ where $P$ and $Q$ are primes ideals of $\mathbb{Q}[[x_1,\ldots,x_{m + 1}]]$ with $P \not\subseteq Q$, $Q \not\subseteq P$, dim$(T/P) = n$ and dim$(T/Q) = m$. Then by Theorem \ref{specified length chains}, $T$ is the completion of a quasi-excellent local domain $A$ with maximal saturated chains of prime ideals of lengths $n$ and $m$. In fact, by Theorem \ref{specified length chains countable}, a countable such $A$ exists.  This shows that there is no bound on how noncatenary quasi-excellent domains, or even countable quasi-excellent domains, can be.

It is natural to then ask if quasi-excellent domains can be noncatenary in infinitely many places.  More precisely, we ask whether or not there exists a quasi-excellent domain $A$ such that there are infinitely many prime ideals $\{P_{\alpha} \}_{\alpha \in \Omega}$  of $A$ with $A/P_{\alpha}$ noncatenary for all $\alpha \in \Omega$ where $\Omega$ is an infinite index set.
%
In this section, we provide an example of a quasi-excellent local domain $A$ with uncountably many height one prime ideals $P \in \Spec A$ such that $A/P$ is noncatenary.

\begin{example}\label{horizontal}
Let 
\begin{align*}
    T' = \frac{\Q[[x,y,z,w]]}{(x)\cap (y,z)} 
\end{align*}
Let $A'$ be a quasi-excellent local domain obtained from Theorem \ref{specified length chains} so that $\widehat{A'} = T'$ and $A'$ has maximal saturated chains of prime ideals of length three and length two. Let $M = (x,y,z,w)$ be the maximal ideal of $T'$. 
Define $A := A'[t]_{(M\cap A',t)}$ where $t$ is an indeterminate, and note that $A$ is a quasi-excellent local domain. We claim that $A/P$ is noncatenary for uncountably many height one prime ideals $P$ of $A$. 



First note that if $a \in A' \cap M$, then $A/(t-a) \cong A'$ and so  $(t-a)$ is a height one prime ideal of $A$. Moreover, since $A'$ is noncatenary, $A/(t-a)$ is noncatenary. Now if in the ring $A'[t]$, the ideal $(t - a)$ is equal to the ideal $(t-b)$ where $a,b \in A' \cap M$ and $a \neq b$, then $a-b \in (t-a)$, contradicting that all nonzero elements in $(t - a)$ have degree at least one with respect to the indeterminate $t$.  It follows that in the ring $A$, the ideals $(t - a)$ and $ (t - b)$ are equal if and only if $a = b$.  Now recall that the ring $A'$ constructed using Theorem \ref{specified length chains} satisfies the condition that there is a one to one correspondence between the prime ideals of $T'$ that are not associated prime ideals of $T'$ and the nonzero prime ideals of $A'$ (See Remark \ref{bijection}).  Since $T'$ has uncountably many prime ideals, so does $A'$ and it follows that $A' \cap M$ has uncountably many elements.  Hence, $A$ has uncountably many height one prime ideals $(t - a)$ such that $A/(t - a)$ is noncatenary.
\end{example}

\section{Acknowledgements}
We thank Williams College and the National Science Foundation, via NSF Grant DMS2241623, and NSF Grant DMS1947438 for their generous funding of our research.

\newpage
\bibliography{Bibliography}
\bibliographystyle{plain}

\end{document}